\newcommand{\mz}{\ensuremath{\mathbb Z}}
\newcommand{\mh}{\ensuremath{\mathbb H}}
\newcommand{\mq}{\ensuremath{\mathbb Q}}
\newcommand{\mc}{\ensuremath{\mathbb C}}
\newcommand{\mymod}{\ensuremath{\negthickspace \negmedspace \pmod}}
\newcommand{\shortmod}{\ensuremath{\negthickspace \negthickspace \negthickspace \pmod}}
\newcommand{\half}{\ensuremath{ \frac{1}{2}}}
\newcommand{\intR}{\int_{-\infty}^{\infty}}
\newcommand{\thalf}{\tfrac12}
\newcommand{\sumstar}{\sideset{}{^*}\sum}
\newcommand{\leg}[2]{\left(\frac{#1}{#2}\right)}
\newcommand{\zL}[2]{L({#1},{#2})}
\theoremstyle{plain}		
	\newtheorem{mytheo}{Theorem}[section]
	\newtheorem{myprop}[mytheo]{Proposition}
     \newtheorem{mylemma}[mytheo]{Lemma}
\theoremstyle{remark}
\def\lam{\lambda}
\begin{document}
\title{The prime geodesic theorem}  
\author{K. Soundararajan}
\address{Stanford University\\
450 Serra Mall, Building 380\\
Stanford, CA 94305-2125}
 \email{ksound@math.stanford.edu}

\author{Matthew P. Young}
\address{Department of Mathematics \\
	  Texas A\&M University \\
	  College Station \\
	  TX 77843-3368 \\
		U.S.A.}
\email{myoung@math.tamu.edu}
\thanks{ The authors are partially supported by grants from the NSF (DMS-0500711  and DMS-0758235)}

\maketitle
\section{Introduction}

\noindent The closed geodesics on the modular surface $\Gamma \backslash {\mh}$, 
$\Gamma =PSL(2,\mz)$, are of arithmetic interest since their lengths 
correspond to regulators of real quadratic fields and these lengths appear with 
multiplicity equal to the class number.   A striking application of this connection is 
the work of Sarnak \cite{Sarnak} where he evaluates the average class number of 
real quadratic fields when the fields are ordered by the size of the regulator.   A key input 
in this application is the Selberg trace formula where the closed geodesics 
appear on the geometric side and they arise as the norms of  
hyperbolic conjugacy classes of ${\Gamma}$.

There is also a beautiful analogy between prime numbers and primitive closed geodesics on $\Gamma \backslash \mathbb{H}$.  Any matrix in a hyperbolic conjugacy class $\{P\}$ in $\Gamma$ 
is conjugate over $SL(2,\mathbb{R})$ to a matrix of the form $\begin{pmatrix} \lambda & 0 \\ 0 & \lambda^{-1} \end{pmatrix}$ with $\lambda >1$.   The trace of such a conjugacy class is 
${\rm tr}(P)=\lambda+\lambda^{-1}$, and we define its norm to be $NP = \lambda^2$.   The hyperbolic conjugacy class $\{ P\}$ is called primitive if $P$ cannot be written as $Q^j$ for some
$Q \in \Gamma$ and $j\ge 2$.   Note that every hyperbolic conjugacy class is a power 
of some primitive class.  We denote by $\{P_0\}$ a typical primitive hyperbolic conjugacy class
in $\Gamma$, and let  $\pi_{\Gamma}(x)$ count the number of primitive 
conjugacy classes with norm below $x$.  Then the prime geodesic theorem states
\begin{equation*}
 \pi_{\Gamma}(x) = %\#\{ \{P_0 \}: NP_0 \leq x \}= 
\int_0^{x} \frac{dt}{\log{t}}  + E(x)= {\rm li}(x)+ E(x),
\end{equation*}
where $E(x)$ is an error term which is the object of our study in this paper.  
Just as the error term in the prime number theorem is related to zeros 
of the Riemann zeta-function via the explicit formula, the error term in the prime 
geodesic theorem may be related to the spectrum of the Laplacian 
on ${\Gamma }\backslash \mathbb{H}$, or equivalently to the zeros 
of the Selberg zeta-function.   Recall that the Selberg zeta function is 
built out of prime geodesics:  
\begin{equation*}
 Z(s) = \prod_{\{P_0\}} \prod_{k=0}^{\infty} (1 - (NP_0)^{-s-k}).
\end{equation*}
The Selberg zeta function is similar to the Riemann zeta function in 
many ways, but there are also crucial differences.  The non-trivial zeros of $Z(s)$ satisfy 
the analog of the Riemann hypothesis.  They may be 
written as $1/2+ i t_j$ and correspond to the 
eigenvalues $1/4 +t_j^2$ of the Laplacian.  Unlike the Riemann 
zeta function which is of order $1$, the Selberg zeta function is meromorphic 
of order $2$.   Given that the Riemann hypothesis holds 
for $Z(s)$ one may expect that the 
error term in the prime geodesic theorem satisfies $E(x)\ll x^{1/2+\epsilon}$, 
but since $Z(s)$ is of order $2$ it has many more zeros 
than $\zeta(s)$, and as an immediate consequence of the 
Selberg trace formula one only obtains that $E(x)\ll x^{3/4+\epsilon}$.  
Nevertheless it is believed that $E(x)\ll x^{\frac 12+\epsilon}$, and 
this remains an outstanding open problem.

 Iwaniec \cite{IwaniecGeodesic} was the first to break the $3/4$ barrier.  
 He showed that $E(x) \ll x^{35/48+ \varepsilon}$, 
 and remarked (see \cite{IwaniecNonholo}, Section 7) that the 
 bound $E(x) \ll x^{2/3 + \varepsilon}$ would 
 follow from the Generalized Lindel{\" o}f Hypothesis for Dirichlet $L$-functions.  
Iwaniec's proof uses the Selberg trace formula, the Kuznetsov formula, 
and Burgess' estimate for character sums.  To go 
between the Selberg and Kuznetsov trace formulas, he 
required information about the size of symmetric-square $L$-functions on the critical line.  
This step in Iwaniec's proof was strengthened by 
Luo and Sarnak \cite{LuoSarnak}, using work of Hoffstein and Lockhart \cite{HL}, 
who thus obtained $E(x) \ll x^{7/10 + \varepsilon}$.   
By refining the step in Iwaniec's proof that uses the Burgess estimate, 
Cai \cite{Cai} improved the Luo-Sarnak bound to get $E(x) \ll x^{71/102 + \varepsilon}$.   
We shall improve these estimates and establish that $E(x)\ll x^{25/36+\varepsilon}$.  
Before stating our result, we note that as in prime number theory it is convenient 
to consider 
$\Psi_{\Gamma}(x) =\sum_{NP \le x} \Lambda(P)$ where the sum is over all 
hyperbolic conjugacy classes $\{ P\}$ and if $\{P\}$ is a power of 
the primitive hyperbolic class $\{P_0\}$ then $\Lambda(P) = \log NP_0$.  
By partial summation we may easily pass between asymptotic 
formulae for $\Psi_\Gamma(x)$ and $\pi_{\Gamma}(x)$.  

\begin{mytheo}
\label{thm:mainthm}
We have
$$
\pi_{\Gamma}(x) =  {\rm li}(x) + O(x^{25/36+\varepsilon}), 
\qquad \text{and} \qquad \Psi_{\Gamma}(x) = x+ O(x^{25/36+\varepsilon}).
$$ 
\end{mytheo}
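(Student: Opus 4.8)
The plan is to prove the estimate for $\Psi_\Gamma(x)$ and to deduce the one for $\pi_\Gamma(x)$ by the partial summation already indicated. First I would insert into the Selberg trace formula a test function approximating the characteristic function of the geodesics of length at most $\log x$, smoothed on a scale $1/T$. The identity and scattering contributions produce the main term $x$ (coming from the eigenvalue $0$, i.e.\ $t_0=i/2$) together with admissible lower-order terms, while the smoothing introduces an error $O(x^{1+\varepsilon}/T)$. Since $PSL(2,\mz)$ has no exceptional eigenvalues, every other $t_j$ is real, and the spectral side reduces to a truncated sum $\sum_{|t_j|\le T} x^{1/2+it_j}/(1/2+it_j)$. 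Partial summation then bounds this in terms of the spectral exponential sum $S(U,x)=\sum_{|t_j|\le U} x^{it_j}$, giving
\begin{equation*}
E(x) \ll \frac{x^{1+\varepsilon}}{T} + x^{1/2+\varepsilon}\,\sup_{U\le T}\frac{|S(U,x)|}{U}.
\end{equation*}
Feeding in only the Weyl law $S(U,x)\ll U^2$ and choosing $T=x^{1/4}$ recovers the trivial $x^{3/4+\varepsilon}$; the entire problem is to exhibit cancellation in $S(U,x)$.

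That cancellation is invisible to the Selberg formula but is exactly what the Kuznetsov formula detects, at the cost of weighting each Maass form $u_j$ by $\omega_j=|\rho_j(1)|^2/\cosh(\pi t_j)$. Because $\omega_j^{-1}$ is, up to constants, $L(1,\mathrm{sym}^2 u_j)$, I would remove this weight by representing the $L$-value through an approximate functional equation, $L(1,\mathrm{sym}^2 u_j)\approx\sum_{n\ll T}\lambda_j(n^2)/n$; controlling its dual terms uniformly is precisely the step that requires bounds for symmetric-square $L$-functions on the critical line, as in Luo--Sarnak's refinement of Iwaniec via Hoffstein--Lockhart. Using the Hecke relation $|\rho_j(1)|^2\lambda_j(n^2)=\overline{\rho_j(1)}\rho_j(n^2)$, the sum $S(U,x)$ turns into a short average over $n\ll T$ of honest Kuznetsov sums $\sum_j \overline{\rho_j(1)}\rho_j(n^2)(\cosh\pi t_j)^{-1}\widetilde h(t_j)$ with $\widetilde h(t)=h(t)x^{it}$. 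Applying Kuznetsov and evaluating the resulting Bessel transform by stationary phase localises the moduli to $c\asymp n\sqrt{x}/U$ and extracts the phase $e(n\sqrt{x}/c)$, leaving
\begin{equation*}
S(U,x) \approx \frac{U^2}{\sqrt{x}}\sum_{n\ll T}\frac{1}{n}\sum_{c\asymp n\sqrt{x}/U}\frac{S(1,n^2;c)}{c}\,e\!\left(\frac{n\sqrt{x}}{c}\right) + (\text{continuous spectrum}),
\end{equation*}
the Eisenstein contribution being smaller and treated by the same analysis applied to $\zeta$.

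The heart of the matter is this double sum of Kloosterman sums. Weil's bound $|S(1,n^2;c)|\ll c^{1/2+\varepsilon}$ applied termwise yields no gain over $x^{3/4}$, so the savings must come from cancellation among the moduli $c$. I would open each Kloosterman sum and, by a standard manipulation (summing over $c$ after a further application of Poisson summation), convert the inner sum into incomplete sums of the quadratic symbol --- the very character sums to which Burgess' inequality applies, and whose treatment was sharpened by Cai. The decisive new input I would aim for is a more efficient estimate for these character sums: combining the Burgess bound with a large-sieve mean value for quadratic characters so as to exploit fully the averaging over both $n$ and $c$, rather than bounding each modulus individually. This character-sum estimate is where I expect essentially all of the difficulty to concentrate; the rest is bookkeeping with Bessel transforms and convergence.

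Finally I would assemble the pieces. The quality of the character-sum bound degrades as $T$ increases and the moduli $c\asymp n\sqrt{x}/U$ shrink, so there is genuine tension between this degradation and the truncation error $x^{1+\varepsilon}/T$. Balancing the two optimises the choice of $T$ and yields $\Psi_\Gamma(x)=x+O(x^{25/36+\varepsilon})$; partial summation then transfers the bound to $\pi_\Gamma(x)=\mathrm{li}(x)+O(x^{25/36+\varepsilon})$, completing the proof.
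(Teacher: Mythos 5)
Your proposal has a genuine gap at precisely the point you yourself flag as decisive. The framework you set up --- trivial unsmoothing error $O(x^{1+\varepsilon}/T)$ plus a spectral exponential sum $S(U,x)=\sum_{|t_j|\le U}x^{it_j}$ --- requires, to reach the exponent $25/36$, a bound of the shape $S(U,x)\ll U x^{7/36}$ uniformly for $U\le x^{11/36}$. The best known input, Luo--Sarnak's $S(U,x)\ll U^{5/4}x^{1/8}\log^2 U$, gives $U^{1/4}x^{1/8}=x^{29/144}$ at $U=x^{11/36}$, just failing the needed $x^{7/36}=x^{28/144}$; balancing with it yields only $x^{7/10+\varepsilon}$. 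So your argument hinges entirely on a strict improvement of the spectral exponential sum, which you propose to extract by opening the Kloosterman sums and estimating the resulting quadratic character sums via ``Burgess plus a large sieve for quadratic characters.'' No such estimate is stated, let alone proved, and there is strong evidence it is not available by that route: Iwaniec's original treatment of exactly these character sums via Burgess gives $35/48$, and Cai's refinement of that same step gives only $71/102$. At the level of generality you describe, the character sums have conductors comparable to the moduli $c\asymp n\sqrt{x}/U$, and neither Burgess nor a quadratic large sieve delivers the cube-root--quality cancellation your numerology demands. The proposal is therefore a reduction of the theorem to an open (and probably harder) estimate, not a proof.

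The paper avoids this wall by a structurally different split, and it is worth seeing why. It never improves the spectral exponential sum at all: Luo--Sarnak's bound is used as a black box to estimate a smoothed count $\Psi_\Gamma(x;k)$, with smoothing scale $Y$, giving $E(x;k)\ll x^{7/8+\varepsilon}Y^{-1/4}$. The gain comes from replacing your \emph{trivial} unsmoothing cost $O(x^{1+\varepsilon}/T)$ (equivalently $O(Y x^{\varepsilon})$) by a genuine short-interval prime geodesic theorem: via Sarnak's class-number correspondence and the identity $\Psi_\Gamma(x+u)-\Psi_\Gamma(x)=2\sum_{X<n\le X'}\sqrt{n^2-4}\,L(1,n^2-4)$, the short-interval count is evaluated asymptotically as
\begin{equation*}
\Psi_{\Gamma}(x+u)-\Psi_{\Gamma}(x) = u + O\bigl(u^{1/2} x^{1/4+\theta/2+\varepsilon}\bigr),
\end{equation*}
where $\theta$ is any subconvexity exponent for $L(\tfrac12+it,\chi_D)$ in the $D$-aspect. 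Here $L(1,n^2-4)$ is approximated by a Dirichlet polynomial whose tail is controlled by the $L$-function bound, and whose coefficients $\lambda_q(n^2-4)$ are averaged over $n$ using exact Kloosterman-sum identities and Weil's bound --- no cancellation over the moduli is needed. The decisive arithmetic input is then Conrey--Iwaniec's Weyl-type bound $\theta=1/6$ (cube-root cancellation, far beyond Burgess's $3/16$), and the optimization $Y=x^{5/6-2\theta/3}$ gives $x^{2/3+\theta/6+\varepsilon}=x^{25/36+\varepsilon}$. In short: you concentrate all the difficulty in the spectral sum, where it is an open problem; the paper moves the difficulty into the unsmoothing step, where an existing theorem (Conrey--Iwaniec) resolves it.
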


Our approach to this Theorem is a little different 
from the previous methods in that we emphasize more 
strongly the connection between prime geodesics 
and, via the class number formula, Dirichlet $L$-functions.   There are 
two parts to the proof: first, using the Selberg trace formula 
we handle  smoothed sums over 
prime geodesics, and second, using Dirichlet $L$-functions 
we handle prime geodesics in short intervals.  Knowledge about prime 
geodesics in short intervals allows us to ``unsmooth" the information 
obtained via the trace formula.  This approach gives us a 
relatively simple and self-contained way of broaching the $3/4$-barrier, 
and, as we indicate in \S 3, we can recover Iwaniec's original result $E(x)\ll x^{35/48+\epsilon}$ 
with a very different proof which ``leaves the theory 
of automorphic functions on a side" --- Iwaniec wondered about such 
a possibility in \cite{IwaniecGeodesic}, page 142.   
The proof of Theorem 1.1 follows upon inputing the work of Luo and Sarnak 
mentioned earlier, and the more recent work of Conrey and Iwaniec \cite{ConreyIwaniec} 
who obtained improvements over the Burgess bound for quadratic Dirichlet $L$-functions.

We also consider here the prime geodesic theorem in short intervals.  Note that 
the trace of a hyperbolic conjugacy class must be a natural number $n>2$, and 
its norm is $((n+\sqrt{n^2-4})/2)^2=n^2 -2 +O(n^{-1})$.  Thus unlike prime numbers 
which have an average gap of $\log x$, the norms of prime geodesics are widely 
spaced with each possible norm appearing with high multiplicity.   Clearly the 
asymptotic $\pi_{\Gamma}(x+h) -\pi_{\Gamma}(x) \sim h/\log x$ 
cannot always hold if $h \le \sqrt{x}$, while it is expected that $\pi(x+h) -\pi(x) 
\sim h/\log x$ holds throughout the range $x^{\half+\epsilon} \le h\le x$.   Proving a 
conjecture of Iwaniec, Bykovskii \cite{B} established that if $x^{1/2+\epsilon} 
\le h\le x$ we have $\pi_{\Gamma}(x+h)-\pi_{\Gamma}(x) \sim h/\log x$.  
Bykovskii's interesting work uses zero density results for Dirichlet $L$-functions 
and he remarked that the method would give the asymptotic formula 
in the range $x\ge h\ge \sqrt{x} \exp(c\sqrt{\log x\log \log x})$ for some constant $c>0$.  
We show that, if one assumes the Generalized Riemann Hypothesis for 
quadratic Dirichlet $L$-functions, then the prime geodesic theorem 
holds in short intervals $[x,x+h]$ provided $x\ge h\ge \sqrt{x} (\log x)^{2+\epsilon}$.  
   
\begin{mytheo}
\label{thm:shortintro}
 Assume the Generalized Riemann Hypothesis for quadratic Dirichlet $L$-functions. 
 In the range $x^{\half} (\log x)^{2+\varepsilon} \leq h \leq x$ we have
\begin{equation*}
 \pi_{\Gamma}(x+h) - \pi_{\Gamma}(x) \sim \frac{h}{\log x}, \qquad 
 \text{and} \qquad \Psi_{\Gamma}(x+h)-\Psi_{\Gamma}(x)\sim h.
\end{equation*}
\end{mytheo}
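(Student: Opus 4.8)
The plan is to convert the short-interval count of prime geodesics, via Dirichlet's class number formula, into a statement about the concentration of the values $L(1,\chi_{t^2-4})$ as $t$ runs over a short interval of integers, and then to invoke GRH. It suffices to treat $\Psi_\Gamma$, since the assertion for $\pi_\Gamma$ follows by partial summation. A hyperbolic class of trace $t\in\mz$, $t\ge 3$, has norm $NP=\rho_t^2$ with $\rho_t=(t+\sqrt{t^2-4})/2$, so the condition $x<NP\le x+h$ confines $t$ to an interval $I$ of consecutive integers with $|I|\asymp h/\sqrt{x}\ge(\log x)^{2+\varepsilon}$. The contribution of proper powers $\{P_0^j\}$, $j\ge 2$, arises from primitive geodesics of norm at most $\sqrt{x+h}$ and is negligible against $h$. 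For the primitive classes I would use the classical bijection with $SL(2,\mz)$-classes of integral binary quadratic forms: the weighted count of classes of trace $t$ is $2\log\rho_t$ times the class number of discriminant $t^2-4$, and since $\rho_t$ is (a power of) the fundamental automorph the regulator in the class number formula $h(D)\log\varepsilon_D=\half\sqrt{D}\,L(1,\cd)$ cancels the weight $\log\rho_t$. Summing over the conductor factorization $t^2-4=Df^2$ then gives
$$\Psi_\Gamma(x+h)-\Psi_\Gamma(x)=\sum_{t\in I}\sqrt{t^2-4}\,L(1,\chi_{t^2-4})+O(\sqrt{x}\log x),$$
where $L(1,\chi_{t^2-4})$ is interpreted through its fundamental discriminant together with a short, absolutely convergent conductor correction, and the displayed error is $o(h)$.

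To extract the main term I would compute the average of $L(1,\chi_{t^2-4})$ over $t\in I$. Writing $L(1,\chi_{t^2-4})=\sum_n \chi_{t^2-4}(n)/n$, the mean over $t$ of $\chi_{t^2-4}(n)=\leg{t^2-4}{n}$ is governed, by multiplicativity, by the local densities at each prime; these produce a nonzero contribution precisely from the (essentially) square values of $n$, and the resulting arithmetic sum is an absolutely convergent Euler product. Weighting by $\sqrt{t^2-4}\asymp\sqrt{x}$ and summing over the $\asymp h/\sqrt{x}$ integers $t\in I$ yields a main term of the predicted size, namely $\sim h$.

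The crux is to show that the fluctuation $\sum_{t\in I}\sqrt{t^2-4}\,(L(1,\chi_{t^2-4})-\overline{L})$ is $o(h)$, where $\overline{L}$ denotes the mean computed above. Here I would open $L(1,\chi_{t^2-4})$ by its approximate functional equation and reduce matters to incomplete sums of the quadratic symbol $\sum_{t\in I}\leg{(t-2)(t+2)}{n}$ over the short interval $I$, for moduli $n$ up to roughly $\sqrt{x}$. For non-square $n$ these sums should exhibit cancellation: completing them modulo $n$ (Poisson summation in $t$) introduces Gauss sums, and summing the resulting complete sums over $n$ reconnects with quadratic Dirichlet $L$-functions, at which point GRH supplies the decisive saving. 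The balance between the effective length of these $L$-functions and the interval length $|I|$ is what forces the threshold $h\ge\sqrt{x}(\log x)^{2+\varepsilon}$.

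The main obstacle is precisely this last estimate: obtaining cancellation in the short-interval character sums uniformly in $n$, and summing the error over $n$ without loss. A naive completion is far too lossy for $n$ near $\sqrt{x}$, while GRH controls $L(1,\chi_{t^2-4})$ pointwise only up to $x^{o(1)}$, so one cannot simply truncate the $L$-value to polylogarithmic length. One must therefore exploit cancellation in the $t$ and the $n$ variables simultaneously, and separate carefully the near-square discriminants $t^2-4$ and the terms with $\gcd(t^2-4,n)>1$, assigning them to the main term rather than to the error. Once the fluctuation is shown to be $o(h)$, the asymptotic $\Psi_\Gamma(x+h)-\Psi_\Gamma(x)\sim h$ follows, and partial summation delivers $\pi_\Gamma(x+h)-\pi_\Gamma(x)\sim h/\log x$.
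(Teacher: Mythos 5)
There is a genuine gap, and it sits exactly where you placed it: the fluctuation estimate you defer to the last paragraph is not a technical loose end but the entire theorem, and the route you propose for it cannot work. Your interval $I$ has length $\asymp h/\sqrt{x}$, which at the threshold is only $(\log x)^{2+\varepsilon}$, while your approximate functional equation produces moduli $n$ up to roughly $\sqrt{x}$. Completing the sums $\sum_{t\in I}\leg{(t-2)(t+2)}{n}$ by Poisson summation costs roughly $\sqrt{n}$ per modulus, astronomically larger than $|I|$; no rearrangement of Gauss sums or appeal to GRH on the dual side can recover cancellation from an incomplete sum whose length is polylogarithmic compared to the modulus. You recognize this (``a naive completion is far too lossy''), but you offer no substitute, only the hope of exploiting $t$ and $n$ ``simultaneously.'' The same defect already infects your main-term computation: averaging $\chi_{t^2-4}(n)$ over $t\in I$ via ``local densities'' presupposes equidistribution of $t$ modulo $n$, which is false for $n$ much larger than $|I|$.

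The key idea you are missing is the one you explicitly (and incorrectly) rule out: under GRH one \emph{can} truncate $L(1,\chi_D)$ to polylogarithmic length. By Littlewood's classical argument (see Lemma 2.1 of the Granville--Soundararajan reference in the paper), GRH gives
\begin{equation*}
L(1,\chi_D)=\prod_{p\le Z}\Bigl(1-\frac{\chi_D(p)}{p}\Bigr)^{-1}\Bigl(1+O\bigl((\log\log X)^{-2}\bigr)\Bigr),
\qquad Z=(\log X)^2(\log\log X)^8 ,
\end{equation*}
and the conductor correction $T_l^{(D)}(1)$ is likewise controlled by primes up to $Z$. This is the only place the paper uses GRH, and it converts the problem into averaging the multiplicative coefficients $\lambda_q(n^2-4)$ (solution counts of $y^2\equiv n^2-4 \pmod{4q}$) over the short interval, but now only for moduli $q$ of polylogarithmic size. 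For such tiny moduli the elementary estimate of the paper's Lemma 3, proved via Kloosterman sums and Weil's bound, gives $\sum_{n\le z}\lambda_q(n^2-4)=z\mu(b)/b+O(q^{1/2+\varepsilon})$ with $q=a^2b$, and the error $O(q^{1/2+\varepsilon})$ is negligible against $\Delta\ge(\log X)^{2+\varepsilon}$ precisely because $q\le Z^{O(1)}$. A second-moment (Chebyshev) argument over the primes in $(z,Z]$, with $z=(\log\log X)^{2-\varepsilon}$, handles the tail of the Euler product. This also explains the true source of the threshold $h\ge\sqrt{x}(\log x)^{2+\varepsilon}$: it is the length $Z\approx(\log x)^2$ of the GRH Euler product, not a balance between the interval and the length of an approximate functional equation, as you conjecture. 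Your opening reduction to $\sum_{t\in I}\sqrt{t^2-4}\,L(1,\chi_{t^2-4})$ matches the paper (which packages the sum over $df^2=t^2-4$ into Zagier's series $L(1,t^2-4)$), but without the GRH truncation the proof does not get off the ground.
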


As mentioned before, the Selberg trace formula connects prime geodesics with the 
eigenvalues of the Laplacian in much the same way as the explicit formula 
weds primes and zeros of the Riemann zeta function.  The Kuznetsov formula 
connects eigenvalues of the Laplacian with Kloosterman sums.  Combining 
the two we see that there is a connection between prime geodesics and Kloosterman 
sums, and this is implicit in the works of Iwaniec, and Luo and Sarnak mentioned 
earlier.  We make this connection explicit and give a direct way of going from 
prime geodesics to Kloosterman sums.

 \begin{mytheo}
\label{thm:L1sum}
Let $w$ be a smooth, even, compactly-supported function with $w(t)=0$ for $t \in [-2,2]$.   
Let 
${\widehat w}$ denote its Fourier transform ${\widehat w}(\xi)=\int_{-\infty}^{\infty} w(x)e^{-2\pi ix\xi}dx$.   
Then, with $P$ running over all hyperbolic conjugacy classes, 
\begin{equation*}
\label{eq:L1avg}
 \sum_{\{P\}} \Lambda(P) \frac{w({\rm tr}(P))}{\sqrt{{\rm tr}(P)^2-4}} = \zeta(2) \sum_{q=1}^{\infty} q^{-2} \sum_{l \in \mz} S(l^2, 1;q) \widehat{w}
 \Big(\frac lq\Big).
\end{equation*}
\end{mytheo}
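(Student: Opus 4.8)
The plan is to pass from the left to the right side by organizing the geodesic sum according to the trace and then feeding in the Dirichlet class number formula. Since $w$ is compactly supported and every hyperbolic trace is an integer $n>2$, the left side is a finite sum; using $\sqrt{\tr(P)^2-4}=\lambda-\lambda^{-1}$ and $w(\tr(P))=w(\lambda+\lambda^{-1})$ I would first group the classes by their common trace, writing the left side as
\begin{equation*}
\sum_{n\geq 3}\frac{w(n)}{\sqrt{n^2-4}}\,G(n),\qquad G(n):=\sum_{\{P\}:\ \tr(P)=n}\Lambda(P).
\end{equation*}
This is precisely the geometric side of the Selberg trace formula, but here the point is to evaluate $G(n)$ by arithmetic rather than spectral means.

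For the second step I would use the classical correspondence between hyperbolic conjugacy classes of trace $n$ and $SL(2,\mz)$-classes of integral binary quadratic forms of discriminant $n^2-4$. All forms of a fixed discriminant $\Delta>0$ share the same automorph group, generated by the fundamental solution of $t^2-\Delta u^2=4$; hence every primitive class attached to $\Delta$ has $\Lambda=2\log\epsilon_\Delta$, and its powers account for the imprimitive forms. Summing over primitive classes (with multiplicity the form class number $h(\Delta)$) and over their powers, and then applying the Dirichlet class number formula in the shape $h(\Delta)\log\epsilon_\Delta=\sqrt{\Delta}\,L(1,\chi_\Delta)$ with $\chi_\Delta=\leg{\Delta}{\cdot}$ the Kronecker symbol, the factors of $\sqrt{\Delta}$ cancel against $\sqrt{n^2-4}=u\sqrt{\Delta}$, and the left side collapses to
\begin{equation*}
2\sum_{n\geq 3}w(n)\sum_{\substack{u\geq 1\\ u^2\mid n^2-4}}\frac{1}{u}\,L\big(1,\chi_\Delta\big),\qquad \Delta=\frac{n^2-4}{u^2},
\end{equation*}
the inner sum running over those $u$ for which $\Delta$ is a discriminant. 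The problem is now an identity about the special value $L(1,\chi_\Delta)$ averaged against $w$.

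Finally I would expand $L(1,\chi_\Delta)=\sum_{m\geq 1}\chi_\Delta(m)/m$ and, for $m$ coprime to $u$, replace $\chi_\Delta(m)$ by the Kronecker symbol $\leg{n^2-4}{m}$, which I would open up as a normalized quadratic Gauss sum in order to introduce additive characters $\e{(n^2-4)x}{m}$. Executing Poisson summation in the trace variable $n$ (folding $n\mapsto-n$ by the evenness of $w$) turns the quadratic phase $e(n^2 x/m)$ into a Gauss sum in the dual frequency together with the factor $\widehat{w}(l/q)$; collapsing the remaining complete exponential sums then produces the Kloosterman sums $S(l^2,1;q)$, with the square $l^2$ arising from the stationary frequency of the quadratic phase. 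I expect this last step to be the crux: one must evaluate the Gauss sums so that the clean modulus $q$ and the exponent $l^2$ emerge, and reassemble the $u$-sum (the conductors, i.e. the imprimitive part of the discriminant) together with the $m$-sum into the single weight $\zeta(2)\sum_q q^{-2}$, all while justifying the interchange of summations — which is legitimate since $\widehat{w}$ is Schwartz and $S(l^2,1;q)\ll q^{1/2+\varepsilon}$.
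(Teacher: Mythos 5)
Your first two steps (grouping the classes by trace and applying the class number formula) coincide exactly with the paper's opening move, which writes the left side as $2\sum_{n\ge 3}w(n)L(1,n^2-4)$ with $L(1,n^2-4)=\sum_{u^2\mid n^2-4}u^{-1}L(1,\chi_{(n^2-4)/u^2})$. The gap is in your final step. After the interchange, the object you must transform for fixed $u$ and $m$ is not $\sum_n w(n)\leg{n^2-4}{m}$ but rather $\sum_{n:\,u^2\mid n^2-4}w(n)\leg{n^2-4}{m}$: the range of the $u$-sum depends on $n$, so the divisibility condition $u^2\mid n^2-4$ (together with $(n^2-4)/u^2\equiv 0,1\pmod 4$) rides along into the $n$-sum. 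Poisson summation must therefore be applied to $n$ restricted to the residue classes modulo roughly $2u^2$ solving $n^2\equiv 4$, which produces frequencies whose denominators involve both $m$ and $u^2$, and brings in exactly the root-counting functions that you were hoping to bypass. Your claimed reassembly of the $u$- and $m$-sums into the weight $\zeta(2)\sum_q q^{-2}$ is asserted, not derived, and this coupling is precisely where it would have to happen. A second, related problem: the Gauss-sum expansion of $\leg{n^2-4}{m}$ is valid only for odd squarefree $m$ (for $m$ with a square factor the Jacobi symbol is an imprimitive character and the naive expansion fails; for even $m$ the Kronecker symbol has period $8m$ in the numerator, not $m$), and you also discard the terms with $(m,u)>1$. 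These bad moduli are not ignorable details: in the true identity the square parts of the moduli are exactly what generate the factor $\zeta(2)$ and let the Kloosterman modulus $q$ run over all integers.

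For contrast, the paper circumvents both problems by never splitting into fundamental discriminants. It works with the single series $L(s,n^2-4)=\sum_q \lambda_q(n^2-4)q^{-s}$ of Zagier and Bykovskii, and its Lemma \ref{lamformula} gives the exact identity
\begin{equation*}
\lambda_q(n^2-4)=\sum_{q_1^2 q_2=q}\frac{1}{q_2}\sum_{k \bmod q_2} e\Big(\frac{kn}{q_2}\Big) S(k^2,1;q_2),
\end{equation*}
i.e.\ Kloosterman sums against a phase \emph{linear} in $n$, obtained by detecting the roots of $y^2+ny+1\equiv 0\pmod q$ with additive characters and then applying Selberg's identity $S(k,k;q)=\sum_{d\mid(k,q)}dS(k^2/d^2,1;q/d)$. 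With the divisibility structure pre-packaged into $\lambda_q$, Poisson summation in $n$ is immediate, the sum over $k\bmod q_2$ and the dual variable glue into a single $l\in\mz$, and $\zeta(2)$ falls out as $\sum_{q_1}q_1^{-2}$ from the square part of $q$. Your quadratic-Gauss-sum mechanism does correctly produce $S(l^2,1;m)$ for odd squarefree $m$ coprime to $u$ (the stationary-frequency computation you describe is sound there), so the idea is salvageable; but as written, the crux that you yourself flag is exactly what is missing, and it is substantially more than a technical interchange of summation.
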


While this formula is quite pretty, the convergence of the sums on the right hand side is a 
little delicate.  So the exact formula may not be of use in applications, but one should be able to 
work out quantitative approximate formulae which may be useful.   
%We have not done 
%so, but such a version could be extracted from our work.  

\section{Descriptions of the prime geodesics}
\label{section:Initial}

\noindent   In this section we collect together descriptions of 
the prime geodesics for $\Gamma$, and describe some preliminary 
results which will be used in the proofs of the main theorems.    
Many of the results given below are well known to the 
experts, but we have opted to recall and sketch them briefly for the 
convenience of the reader.  
Throughout the paper we shall use the notation $X= \sqrt{x}+1/\sqrt{x}$.  
The condition $NP \le x$ is then equivalent to $\text{tr}(P) \le X$.

 Sarnak (\cite{Sarnak}, Proposition 4) showed that  primitive hyperbolic conjugacy classes correspond to equivalence classes of primitive indefinite
 binary quadratic forms.  Sarnak's bijection is as follows:  Given a primitive 
 binary quadratic form $ax^2 + bxy+ cy^2$ (primitive means that g.c.d.$(a, b,c)=1$) of discriminant $d$, the automorphs of this 
 form are $\pm P(t,u)$ where   
\begin{equation*}
 P(t,u) = \begin{pmatrix}
           \half(t-bu) & -cu \\ au & \half(t+bu)
          \end{pmatrix},
\end{equation*}
with $t^2 - du^2 = 4$ being a solution to the Pell equation.  
For non-zero $u$, $P(t,u)$ is hyperbolic with norm $(t + u\sqrt{d})^2$ and trace $t$.  
If $(t_d,u_d)$ denotes the fundamental solution to the Pell equation, 
then $P(t_d, u_d)$ is a primitive hyperbolic matrix with trace $t_d$ and norm $\epsilon_d^2$, 
and the other $P(t,u)$ are powers of $P(t_d,u_d)$.  
Sarnak's correspondence 
maps the primitive quadratic form $ax^2 + bxy + cy^2$ of discriminant $d$ to the primitive hyperbolic element $P(t_d, u_d)$.  Thus for every discriminant $d$ we 
see that there are $h(d)$, the class number, primitive hyperbolic conjugacy classes 
and these all have the same trace $t_d$ and norm $\epsilon_d^2$.

Now every hyperbolic conjugacy class $\{P\}$ may  be expressed as $\{P_0^j\}$ for a natural number $j$ and a primitive hyperbolic conjugacy class $\{P_0\}$.  From the correspondence 
described above, we see that with $d$ denoting a discriminant,
$$ 
\Psi_{\Gamma}(x) = \sum_{2<t \le X} \sum_{\substack{d \\ t^2 -du^2 =4 }} h(d) \log (\epsilon_d^2).  
$$  
The class number formula $h(d) \log \epsilon_d = \sqrt{d} L(1,\chi_d)$, where 
$\chi_d$ is the (not necessarily primitive) real character associated to the discriminant $d$, 
allows us to write 
\begin{equation} 
\label{eq:L1}
\Psi_{\Gamma}(x) = 2 \sum_{2< t\le X} \sum_{du^2=t^2 -4} \sqrt{d}L(1,\chi_d).
\end{equation} 

We now describe how the inner sum in \eqref{eq:L1} may be expressed as 
$\sqrt{t^2 -4} L(1,t^2-4)$ for a certain ``natural" Dirichlet series $L(s,\delta)$ defined 
for all discriminants $\delta$.   The Dirichlet series $L(s,\delta)$ that we need 
appears in the work of Bykovskii \cite{B}, and in a different context  may be 
found in the work of Zagier \cite{Zagier}.   For a discriminant $\delta$, and 
a natural number $q$ we define 
 \begin{equation*}
\rho_q(\delta) = \#\{x \mymod{2q} : x^2 \equiv \delta \mymod{4q} \},
\end{equation*} 
and 
\begin{equation*}
 \lambda_q(\delta) = \sum_{q_1^2 q_2 q_3 = q} \mu(q_2) \rho_{q_3}(\delta), 
 \qquad \text{so that  } \qquad \rho_q(\delta)= \sum_{q_1q_2= q} \lambda_{q_1}(\delta) \mu(q_2)^2.
\end{equation*}
Note that for a fixed $\delta$, $\lambda_q$ and $\rho_q$ are multiplicative functions 
of $q$.   By counting carefully the solutions to $x^2 \equiv \delta \pmod{4q}$, 
we may check the following explicit description of the function $\lambda_q(\delta)$ 
on prime powers $q$:  Let $\delta =D l^2$ with $D$ a fundamental discriminant, 
and let $p^r$ be the exact power of $p$ dividing $l$.  Set $a=\min([k/2], r)$ and then 
we have 
\begin{equation}
\label{lam}
\lambda_{p^k}(\delta) = p^a \chi_{\delta p^{-2a}}(p^{k-2a}).
\end{equation}

We now define 
\begin{equation}
\label{eq:Loriginal}
\zL{s}{\delta} =  \frac{\zeta(2s)}{\zeta(s)} \sum_{q=1}^{\infty} \rho_q(\delta) q^{-s} = \sum_{q=1}^{\infty} \lambda_q(\delta) q^{-s}.  
\end{equation}
Note that if $\delta=0$ we have $\zL{s}{\delta} = \zeta(2s-1)$.  If $\delta$ is a nonzero discriminant, 
we may write $\delta = D l^2$ with $D$ a fundamental discriminant and then, as we 
may check using \eqref{lam}, 
\begin{equation}
\label{eq:LwithT}
\zL{s}{\delta} = l^{\half-s} T_l^{(D)}(s) L(s,\chi_D),
\end{equation}
where
\begin{equation}
\label{eq:LwithT2}
T_l^{(D)}(s) = \sum_{l_1 l_2 = l} \chi_D(l_1) \frac{\mu(l_1)}{\sqrt{l_1}} \tau_s(l_2), \qquad \tau_s(k) = k^{s-\half} \sum_{a | k} a^{1-2s}.
\end{equation}

The series $L(s,\delta)$ arose naturally in Zagier's work \cite{Zagier} 
as follows:  Consider 
\begin{equation*}
 \zeta(s,\delta) = \sum_{\{Q_{a,b,c}\}} \sum_{\substack{(m,n) \in \mz^{2}/\text{Aut}(Q_{a,b,c}) \\ Q_{a,b,c}(m,n) > 0} } \frac{1}{Q_{a,b,c}(m,n)^s},
\end{equation*}
where the outer sum is over equivalence classes of binary quadratic forms of discriminant $\delta$, and the inner sum is over equivalence classes of pairs of integers modulo the group of automorphs of the form $Q_{a,b,c}$.  Then $\zeta(s,\delta) = \zeta(s) L(s, \delta)$.   The 
expressions \eqref{eq:LwithT} and \eqref{eq:LwithT2} and the 
first two assertions of Lemma \ref{Llemma} below appear in Proposition 3 of Zagier's 
article.

\begin{mylemma} 
\label{Llemma}  Suppose %$n>2$ and %$n^2 -4 = D l^2$ 
$\delta$ is a discriminant and $\delta = Dl^2$
with $D$ a fundamental discriminant.  Then
\begin{equation}
\label{eq:twoLs}
%\sum_{du^2 = n^2 - 4} L(s, \chi_d) u^{1-2s} = \zL{s}{n^2-4}.
\sum_{df^2 = \delta} L(s, \chi_d) f^{1-2s} = \zL{s}{\delta}.
\end{equation}
Furthermore, letting $\mathfrak{a} = 0$ or $1$ according to whether $\chi_D(-1) = 1$ or $-1$, respectively, we have 
\begin{equation*}
\Lambda(s,\delta):=\leg{|\delta|}{\pi}^{s/2} \Gamma\leg{s + \mathfrak{a}}{2} \zL{s}{\delta} 
= \Lambda(1-s, \delta).
\end{equation*}
Finally the zeros of $T_{l}^{(D)}(s)$ all lie on the line Re$(s)=1/2$, 
so that the Generalized Riemann Hypothesis for $\zL{s}{\delta}$ 
is equivalent to GRH for $L(s,\chi_D)$.  
\end{mylemma}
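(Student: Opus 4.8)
The plan is to establish the three assertions in turn, in each case reducing to the factorization \eqref{eq:LwithT} and the explicit shape \eqref{eq:LwithT2} of $T_l^{(D)}(s)$, which are available to us before the lemma. For the identity \eqref{eq:twoLs}, I would first note that since $D$ is fundamental, the discriminants $d$ with $df^2=\delta=Dl^2$ are exactly $d=D(l/f)^2$ with $f\mid l$. Writing $g=l/f$ and using the factorization of the Kronecker symbol $\leg{Dg^2}{n}=\leg{D}{n}\leg{g}{n}^2$, so that $\chi_{Dg^2}(n)=\chi_D(n)$ when $\gcd(n,g)=1$ and $0$ otherwise, one gets $\zL{s}{\chi_{Dg^2}}=\zL{s}{\chi_D}\prod_{p\mid g}(1-\chi_D(p)p^{-s})$. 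Substituting this into the left side of \eqref{eq:twoLs} and factoring out $\zL{s}{\chi_D}$ reduces the claim to the elementary identity $\sum_{g\mid l}(l/g)^{1-2s}\prod_{p\mid g}(1-\chi_D(p)p^{-s})=l^{\half-s}T_l^{(D)}(s)$; both sides are multiplicative in $l$, so it suffices to check it on prime powers directly against \eqref{eq:LwithT2}, which is a short computation.

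For the functional equation I would exploit the symmetry of the arithmetic factor $\tau_s$. The substitution $a\mapsto k/a$ in \eqref{eq:LwithT2} shows $\tau_{1-s}(k)=\tau_s(k)$, and since the weights $\chi_D(l_1)\mu(l_1)/\sqrt{l_1}$ carry no dependence on $s$, this gives at once $T_l^{(D)}(1-s)=T_l^{(D)}(s)$. Next, because $\chi_D$ is a real primitive character of conductor $|D|$ its completed $L$-function has root number $+1$; a one-line manipulation shows that $\Lambda_D(s):=(|D|/\pi)^{s/2}\Gamma((s+\mathfrak a)/2)\zL{s}{\chi_D}$ differs from the standard completion only by the $s$-independent factor $(|D|/\pi)^{-\mathfrak a/2}$, hence $\Lambda_D(s)=\Lambda_D(1-s)$. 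Finally I would insert \eqref{eq:LwithT} and the relation $|\delta|=|D|l^2$ into the definition of $\Lambda(s,\delta)$ to obtain $\Lambda(s,\delta)=l^{\half}\,T_l^{(D)}(s)\,\Lambda_D(s)$; as both factors on the right are invariant under $s\mapsto1-s$, so is $\Lambda(s,\delta)$.

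The heart of the lemma, and the step I expect to be the main obstacle, is the location of the zeros of $T_l^{(D)}(s)$. Since $T_l^{(D)}$ is multiplicative in $l$ it suffices to treat $l=p^m$, where \eqref{eq:LwithT2} collapses to $T_{p^m}^{(D)}(s)=\tau_s(p^m)-\frac{\chi_D(p)}{\sqrt p}\tau_s(p^{m-1})$. Writing $X=p^{\half-s}$ one computes $\tau_s(p^j)=\sum_{i=0}^{j}X^{2i-j}=(X^{j+1}-X^{-(j+1)})/(X-X^{-1})=p_j(Y)$, where $Y=X+X^{-1}$ and $p_j$ is the Chebyshev-type polynomial determined by $p_j(2\cos\theta)=\sin((j+1)\theta)/\sin\theta$. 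Hence $T_{p^m}^{(D)}(s)=S(Y)$ with $S(Y):=p_m(Y)-\frac{\chi_D(p)}{\sqrt p}p_{m-1}(Y)$, and, since $Y\in[-2,2]$ precisely when $\operatorname{Re}(s)=\half$, the assertion is equivalent to the statement that every zero of $S$ lies in $[-2,2]$.

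Two facts then finish the argument. First, $S(Y)=\det(YI-J')$, where $J'$ is the $m\times m$ real symmetric tridiagonal matrix with zero diagonal, off-diagonal entries $1$, and corner entry $\chi_D(p)/\sqrt p$ (the three-term recurrence $p_j=Yp_{j-1}-p_{j-2}$ identifies $p_m$ with the characteristic polynomial of the unperturbed matrix, and expanding along the last row produces the corner modification). Being symmetric, $J'$ has only real eigenvalues, so all zeros of $S$ are real. Second, I must rule out a real zero with $|Y|>2$: such a zero corresponds to a real $x$ with $|x|>1$ satisfying $x^{m+1}-x^{-(m+1)}=\frac{\chi_D(p)}{\sqrt p}(x^m-x^{-m})$, whereas the elementary inequality $|x^{m+1}-x^{-(m+1)}|>|x^m-x^{-m}|$ holds for every real $|x|>1$ while $|\chi_D(p)|/\sqrt p\le p^{-1/2}<1$. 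This contradiction confines all zeros of $S$ to $[-2,2]$, placing every zero of $T_{p^m}^{(D)}$ on $\operatorname{Re}(s)=\half$. Feeding this back into \eqref{eq:LwithT}, where the nonvanishing factor $l^{\half-s}$ contributes nothing, the nontrivial zeros of $\zL{s}{\delta}$ split into the on-line zeros of $T_l^{(D)}$ and the nontrivial zeros of $\zL{s}{\chi_D}$, yielding the stated equivalence of the two Riemann Hypotheses. The genuinely delicate point is this last inequality: reality of the zeros comes for free from symmetry, but keeping the single potentially stray eigenvalue of the perturbed Jacobi matrix inside $[-2,2]$ is exactly what the trivial bound $|\chi_D(p)|\le1$ secures.
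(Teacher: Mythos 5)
Your proposal is correct, and on the first two assertions it essentially coincides with the paper's proof: your Euler product $\prod_{p\mid g}\left(1-\chi_D(p)p^{-s}\right)$ is exactly the paper's M\"obius sum $\sum_{u\mid r}\mu(u)\chi_D(u)u^{-s}$ for the imprimitive $L$-function, and both proofs of the functional equation rest on the symmetry $\tau_s=\tau_{1-s}$ (via $a\mapsto k/a$) combined with the functional equation of $L(s,\chi_D)$; your intermediate factorization $\Lambda(s,\delta)=l^{1/2}\,T_l^{(D)}(s)\,\Lambda_D(s)$ merely makes explicit what the paper asserts in one line. The genuine divergence is in the zero-location argument, and your alternative is valid. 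The paper takes the numerator in \eqref{eq:TZ}, substitutes $Z=e^{i\theta}$, and locates $2k$ zeros of $\sin((k+1)\theta)-\epsilon\sin(k\theta)$ on the circle by an intermediate-value argument for the ratio $\sin((k+1)\theta)/\sin(k\theta)$; together with $Z=\pm1$ these exhaust all $2k+2$ roots, so none can lie off the circle. You instead pass to the Chebyshev variable $Y=X+X^{-1}$, identify $p_m(Y)-\epsilon\, p_{m-1}(Y)$ as the characteristic polynomial of a real symmetric Jacobi matrix with one perturbed diagonal corner (your expansion $\det(YI-J')=(Y-\epsilon)p_{m-1}(Y)-p_{m-2}(Y)$ checks out), so reality of all zeros comes from the spectral theorem, and then you exclude real zeros with $|Y|>2$ via the inequality $|x^{m+1}-x^{-(m+1)}|>|x^{m}-x^{-m}|$ for real $|x|>1$ against $|\epsilon|\le p^{-1/2}<1$. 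Your route cleanly separates the two issues (reality of zeros; confinement to $[-2,2]$) and makes completely explicit where the bound $|\chi_D(p)|\le 1<\sqrt{p}$ enters; it also transfers verbatim to any real perturbation of size less than $1$. The paper's route is shorter and yields simplicity of the zeros as a byproduct (one zero per interval between consecutive poles of the ratio), but it also uses $|\epsilon|<1$, only tacitly: at $\theta=0$ and $\theta=\pi$ the ratio has removable singularities with limiting values $\pm(k+1)/k$, so on the intervals abutting these points the ratio sweeps only a half-line, and one needs $|\epsilon|<1\le(k+1)/k$ to guarantee the value $\epsilon$ is attained there; your argument surfaces that hypothesis rather than hiding it.
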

%\noindent {Remarks}.  
%\begin{itemize}
%\item An additional wonderful property of $\zL{s}{\delta}$ is that its zeros are on the critical line if and only if GRH holds for $L(s,\chi_{\delta})$.  By \eqref{eq:LwithT}, this fact is equivalent to showing that $T_l^{(D)}(s)$ has all its zeros on the critical line, a fact we demonstrate in Lemma \ref{lemma:shortproduct} below.
 %This lemma establishes the indefinite case of the Hurwitz class number formula by starting with the right hand side of \eqref{eq:Hurwitzclassindef}, using \eqref{eq:Hurwitzsumindef}, applying the class number formula, and then applying \eqref{eq:twoLs} with $s=1$.
%\end{itemize}
\begin{proof}
The functional equation follows upon using the functional 
equation for $L(s,\chi_D)$ together with 
$$ 
\tau_s(k) = k^{s-\frac 12} \sum_{a|k} a^{1-2s} = \sum_{ab=k} \Big(\frac ab\Big)^{\frac 12-s} 
= \tau_{1-s}(k). 
$$ 
%As noted above, it is also implied by part (ii) of Proposition 3 of \cite{Zagier}.
 
Next we prove \eqref{eq:twoLs}.  If $d=Dr^2$ then $L(s,\chi_d) = L(s,\chi_D) \sum_{u|r} 
\mu(u) \chi_D(u)u^{-s}$.  Thus the left hand 
side of \eqref{eq:twoLs} is 
\begin{equation*}
L(s,\chi_D) \sum_{fr = l} f^{1-2s} \sum_{u|r} \frac{\mu(u)\chi_D(u)}{u^s} 
 = L(s,\chi_D) \sum_{u| l} \frac{\mu(u)\chi_D(u)}{u^s} \sum_{f| (l/u)} f^{1-2s}.  
\end{equation*} 
We easily recognize the RHS above as $L(s,\delta)$, as 
claimed.

It remains lastly to show that the zeros of $T_{l}^{(D)}(s)$ lie 
on the critical line. 
Since $T_{l}^{(D)}(s)$ is a multiplicative function of $l$, we need only 
consider $T_{p^k}^{(D)}(s)$.  Note that $T_{p^k}^{D}(s) = \tau_s(p^k) -\tau_s(p^{k-1})  
\chi_D(p)/\sqrt{p}$.  Letting $Z= p^{1/2-s}$ we find that $\tau_s(p^j) = (Z^j-Z^{-j})/(Z-Z^{-1})$.  
Setting also $\epsilon = \chi_D(p)/\sqrt{p}$ we obtain that
\begin{equation}
\label{eq:TZ}
 T_{p^k}^{(D)}(s) = \frac{Z^{k+1}-\epsilon Z^k + \epsilon Z^{-k} -Z^{-k-1}}{Z-Z^{-1}}.
\end{equation}
Let $p_{\epsilon}(Z)$ denote the 
numerator above.  Note that $p_{\epsilon}(\pm 1)=0$, and 
that it has an additional $2k$ zeros for $Z \in \mc$.
With the substitution $Z = e^{i \theta}$, 
\begin{equation*}
(2i)^{-1} p_{\epsilon}(e^{i \theta}) = \sin((k+1)\theta) - \epsilon \sin(k \theta).
\end{equation*}
Observe that $\sin((k+1)\theta)/\sin(k\theta)$ has singularities at $\theta=\pi j/k$ for $j=0,1,\dots (2k-1)$ and in each interval $(\pi j/k, \pi (j+1)/k)$ takes every real value exactly once.
Thus $p_{\epsilon}(e^{i \theta})$ has $2k$ zeros in $(0, 2 \pi)$, and so 
all the zeros of $p_{\epsilon}(Z)$ are on the unit circle.  It 
follows that all the zeros of $T_{p^k}^{(D)}(s)$ are on the critical line.
%Then we can write
%\begin{equation}
%\label{eq:TZfactored}
%p^{k(\half-s)} T_{p^k}^{(D)}(s) = \prod_{i=1}^{k} (Z- \rho_i)(Z - \overline{\rho_i})
%\end{equation}
%for some distinct set of complex numbers $\rho_i$ with $|\rho_i| = 1$. 
\end{proof}

We now return to our discussion of prime geodesics.  
Taking $s=1$ in \eqref{eq:twoLs} we find that 
$$ 
\sqrt{\delta} L(1,\delta) = \sum_{df^2 =\delta} \sqrt{d} L(1,\chi_d). 
$$ 
Using this in \eqref{eq:L1} we arrive at the following 
Proposition. 

\begin{myprop}
\label{coro:L}
Recall that $X=\sqrt{x}+1/\sqrt{x}$.  Then we have 
\begin{equation}
\label{eq:psi1}
\Psi_{\Gamma}(x) = 2 \sum_{n \leq X} \sqrt{n^2-4} \zL{1}{n^2-4}.
\end{equation}
\end{myprop}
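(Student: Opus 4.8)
$$\Psi_{\Gamma}(x) = 2 \sum_{n \leq X} \sqrt{n^2-4}\, L(1, n^2-4).$$

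Let me understand what's going on and how I'd prove it.The plan is to insert the $s=1$ specialization of the identity \eqref{eq:twoLs} directly into the formula \eqref{eq:L1} for $\Psi_{\Gamma}(x)$; all the real work is already contained in Lemma \ref{Llemma}, so what remains is a substitution. First I would set $s=1$ in \eqref{eq:twoLs}, obtaining $\sum_{df^2=\delta} \zL{1}{\chi_d}\, f^{-1} = \zL{1}{\delta}$ for every discriminant $\delta$. Multiplying through by $\sqrt{\delta}$ and using that $\delta = df^2$ forces $\sqrt{\delta} = f\sqrt{d}$ (positive square roots, $f\ge 1$), the factor $\sqrt{\delta}/f$ collapses to $\sqrt{d}$ and yields the clean identity
\begin{equation*}
\sqrt{\delta}\,\zL{1}{\delta} = \sum_{df^2=\delta} \sqrt{d}\,\zL{1}{\chi_d}.
\end{equation*}
This is precisely the inner double sum appearing in \eqref{eq:L1}, read with $\delta = t^2-4$.

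Before applying the identity I would verify that $\delta = t^2-4$ is an admissible discriminant for every integer $t>2$, so that the decomposition $\delta = Dl^2$ underlying \eqref{eq:twoLs} is in force. Indeed $t^2-4 \equiv t^2 \equiv 0$ or $1 \pmod 4$ according to the parity of $t$, and $t^2-4>0$ whenever $t>2$, so $t^2-4$ is a genuine positive discriminant. The inner sum $\sum_{du^2=t^2-4} \sqrt{d}\,\zL{1}{\chi_d}$ of \eqref{eq:L1} then equals $\sqrt{t^2-4}\,\zL{1}{t^2-4}$, and substituting gives
\begin{equation*}
\Psi_{\Gamma}(x) = 2 \sum_{2 < t \le X} \sqrt{t^2-4}\,\zL{1}{t^2-4}.
\end{equation*}

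Finally I would relabel the summation variable as $n$; this is the assertion \eqref{eq:psi1}, with the range $2<n\le X$. I expect the only delicate point to be the boundary bookkeeping: the sum is naturally over admissible traces $n>2$, and writing it as $\sum_{n\le X}$ is harmless, since the term $n=2$ carries the vanishing factor $\sqrt{n^2-4}=0$ (formally $0\cdot\zL{1}{0}$ with $\zL{1}{0}=\zeta(1)$, interpreted as $0$), while smaller $n$ correspond to no hyperbolic conjugacy class and are not counted. Everything else is a direct consequence of Lemma \ref{Llemma}.
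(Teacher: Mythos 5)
Your proposal is correct and follows essentially the same route as the paper: the paper's proof consists precisely of setting $s=1$ in \eqref{eq:twoLs}, rewriting it as $\sqrt{\delta}\,\zL{1}{\delta} = \sum_{df^2=\delta}\sqrt{d}\,\zL{1}{\chi_d}$, and substituting this into \eqref{eq:L1}. Your additional checks (that $t^2-4$ is a positive discriminant for $t>2$, and that the boundary terms $n\le 2$ contribute nothing) are sound and only make explicit what the paper leaves implicit.
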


This expression for $\Psi_{\Gamma}(x)$ may be found in Bykovskii (see \cite{B}, (2.2)) 
who quotes a preprint of Kuznetsov \cite{Kuznetsov} which is difficult to find.
 
 For $n >2$, the sum 
\begin{equation*}
\label{eq:hsumindef}
\sum_{df^2 = n^2-4} h(d) \log \epsilon_d = \sum_{df^2 = n^2-4} \sqrt{d} L(1, \chi_d)
\end{equation*}
appearing in \eqref{eq:L1} is reminiscent of the formula for the Hurwitz class number 
for negative discriminants.   Furthermore, Zagier (\cite{Zagier}, Proposition 3(iv)) observed that for 
negative discriminants $\delta$, one has that $\sqrt{|\delta|}L(1,\delta)/\pi$ 
equals the Hurwitz class number $H(\delta)$.   The Hurwitz class number 
for positive discriminants appears less well-known, but we refer to a 
paper of McKee \cite{McKee} who gives an analogous definition of $H(\delta)$  
for positive discriminants.  Rather nicely, it turns out that the 
``Hurwitz class number formula" $\sqrt{\delta} L(1,\delta)=H(\delta) \log \epsilon_\delta$ 
holds for positive discriminants.  

Our results are based upon analyzing the $L$-values appearing 
in Proposition \ref{coro:L}.   To this end, we derive a useful relation 
connecting $\lambda_q(n^2-4)$ and $\rho_q(n^2-4)$ to Kloosterman sums.

\begin{mylemma} 
 \label{lamformula} 
For any natural number $q$ and $n\ge 3$ we have 
\[ 
\rho_q(n^2-4) = \frac{1}{q} \sum_{k \shortmod{q} } e\Big(\frac{kn}{q}\Big) S(k,k;q), 
\]
and 
\[
 \lam_q(n^2-4)= \sum_{q_1^2 q_2=q} \frac{1}{q_2} \sum_{k \shortmod {q_2}} 
e\Big(\frac{kn}{q_2}\Big) S(k^2,1;q_2).
\]
If we write $q=a^2b$ with $b$ square-free, then for any $z\ge 2$ we have 
$$ 
\sum_{n\le z} \lambda_q(n^2-4) = z \frac{\mu(b)}{b} + O(q^{\frac 12+\epsilon}). 
$$
 \end{mylemma}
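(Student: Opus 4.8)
The plan is to establish the three assertions in order, each resting on the previous one.

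\textbf{The formula for $\rho_q(n^2-4)$.} I would open up the Kloosterman sum $S(k,k;q)=\sumstar_{d\shortmod q}\e{k(d+\bar d)}{q}$, interchange the order of summation, and execute the sum over $k$ by orthogonality: $\sum_{k\shortmod q}\e{k(n+d+\bar d)}{q}$ equals $q$ when $d+\bar d\equiv -n\mymod q$ and vanishes otherwise. Hence the right-hand side counts the residues $d$ coprime to $q$ with $d+\bar d\equiv -n\mymod q$, equivalently the solutions of $d^2+nd+1\equiv 0\mymod q$ (the coprimality being automatic since a common factor of $d$ and $q$ would have to divide the constant term $1$). The substitution $x=2d+n$ satisfies $x^2-(n^2-4)=4(d^2+nd+1)$, so it carries these solutions bijectively onto the residues $x\shortmod{2q}$ with $x^2\equiv n^2-4\mymod{4q}$, the point being that every such $x$ satisfies $x\equiv n\mymod 2$. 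This yields precisely the count $\rho_q(n^2-4)$.

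\textbf{The formula for $\lam_q(n^2-4)$ and the main obstacle.} Recalling $\lam_q(\delta)=\sum_{q_1^2q_2q_3=q}\mu(q_2)\rho_{q_3}(\delta)$, I would substitute the first identity for $\rho_{q_3}$ and regroup the divisors via $Q_1=q_1,\ Q_2=q_2q_3$, so that the claim becomes equivalent to the single Dirichlet-convolution identity
\[
A(m):=\frac{1}{m}\sum_{k\shortmod m}\e{kn}{m}S(k^2,1;m)=\sum_{de=m}\mu(d)\,\rho_e(n^2-4).
\]
Both sides are multiplicative in $m$ (the left by twisted multiplicativity of Kloosterman sums together with the Chinese remainder theorem applied to the $k$- and $d$-sums), so it suffices to check it on prime powers $m=p^j$. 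The coprime terms are immediate, since $S(k^2,1;q)=S(k,k;q)$ whenever $(k,q)=1$; the content lies in the terms with $p\mid k$. Writing $S(k^2,1;p^j)=\sumstar_{d}\e{k^2d+\bar d}{p^j}$ and summing over $k$ first produces the quadratic Gauss sum
\[
\sum_{k\shortmod{p^j}}\e{dk^2+nk}{p^j},
\]
and completing the square (inverting $2d$) evaluates it in terms of a Jacobi symbol and a factor $p^{j/2}$ — precisely the origin of the $\chi_D$ and the half-integral powers appearing in \eqref{eq:LwithT2}. Matching this against the explicit values of $\lam_{p^k}(n^2-4)$ from \eqref{lam} (using $\rho_{p^k}=\lam_{p^k}+\lam_{p^{k-1}}$) finishes the verification. \emph{I expect this quadratic Gauss-sum bookkeeping to be the main obstacle}, in particular the ramified prime $p=2$ and the degenerate cases $p\mid d$ where the square cannot be completed; everything outside these cases is formal.

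\textbf{The mean value.} For the average I would insert the second identity and sum over $n$ first,
\[
\sum_{n\le z}\lam_q(n^2-4)=\sum_{q_1^2q_2=q}\frac{1}{q_2}\sum_{k\shortmod{q_2}}S(k^2,1;q_2)\sum_{n\le z}\e{kn}{q_2}.
\]
The diagonal term $k\equiv 0\mymod{q_2}$ contributes the Ramanujan sum $S(0,1;q_2)=\mu(q_2)$ against $z+O(1)$, and the elementary identity $\sum_{q_1^2q_2=q}\mu(q_2)/q_2=\mu(b)/b$ (checked on prime powers, with $q=a^2b$ and $b$ square-free) extracts the main term $z\,\mu(b)/b$. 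For $k\not\equiv 0$ the inner sum over $n$ is geometric and $\ll \|k/q_2\|^{-1}$, while the Weil bound gives $|S(k^2,1;q_2)|\ll q_2^{1/2+\epsilon}$ since $(k^2,1,q_2)=1$; combining these with $\sum_{0<k<q_2}\|k/q_2\|^{-1}\ll q_2\log q_2$ bounds the off-diagonal contribution by $\sum_{q_1^2q_2=q}q_2^{1/2+\epsilon}\ll q^{1/2+\epsilon}$, which is exactly the asserted error term.
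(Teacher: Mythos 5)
Your treatments of the first and third assertions are correct and essentially the same as the paper's: orthogonality of additive characters together with the substitution $x=2d+n$ for the formula for $\rho_q(n^2-4)$, and, for the mean value, the $k=0$ term giving $S(0,1;q_2)=\mu(q_2)$ with the main term extracted via $\sum_{q_1^2q_2=q}\mu(q_2)/q_2=\mu(b)/b$, while the Weil bound and the geometric-series estimate handle $k\neq 0$. Your reduction of the second assertion to the convolution identity $A(m)=\sum_{de=m}\mu(d)\rho_e(n^2-4)$, where $A(m)=\tfrac1m\sum_{k\shortmod{m}}e(kn/m)S(k^2,1;m)$, together with the multiplicativity of both sides, is also valid.

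The gap is exactly where you flag it: the prime-power verification of that convolution identity is the entire content of the second formula, and your proposal does not carry it out. Evaluating $\sum_{k\shortmod{p^j}}e((dk^2+nk)/p^j)$, handling the resulting twisted sum over $d$, treating $p=2$ (where $2d$ cannot be inverted to complete the square), and matching the outcome against \eqref{lam} is a genuinely delicate computation that you only gesture at; note also that your worry about ``$p\mid d$'' is misstated, since $d$ runs over units modulo $p^j$ in the starred sum --- the real degeneracy appears only after restricting to $p\mid k$ and rescaling $k=pk'$, which makes the quadratic coefficient divisible by $p$. The tool you are missing, which removes all of this, is Selberg's identity
\[
S(a,b;c)=\sum_{d\mid(a,b,c)}d\,S(ab/d^2,1;c/d),
\]
valid for every modulus $c$, including powers of $2$. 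The paper inserts the first formula for $\rho_{q_3}$ into $\lambda_q(n^2-4)=\sum_{q_1^2q_2q_3=q}\mu(q_2)\rho_{q_3}(n^2-4)$, applies this identity with $a=b=k$, $c=q_3$, sets $q_4=q_3/d$ and re-indexes $k$; the sum $\sum_{q_2\mid(q/q_1^2q_4)}\mu(q_2)$ then vanishes unless $q_1^2q_4=q$, and the second formula drops out in a few lines, uniformly in all primes, with no Gauss sums and no appeal to \eqref{lam}. The same identity would also close your own local argument: for $p\mid k$ it gives $S(k,k;p^j)-S(k^2,1;p^j)=\sum_{d\mid(k,p^j),\,d>1}d\,S(k^2/d^2,1;p^j/d)$, and writing $d=p^i$, $k=p^ik'$ yields
\[
\frac1{p^j}\sum_{\substack{k\shortmod{p^j}\\ p\mid k}}e(kn/p^j)\bigl(S(k,k;p^j)-S(k^2,1;p^j)\bigr)=\sum_{j'=0}^{j-1}A(p^{j'}),
\]
whence $\sum_{j'\le j}A(p^{j'})=\rho_{p^j}(n^2-4)$ and $A(p^j)=\rho_{p^j}(n^2-4)-\rho_{p^{j-1}}(n^2-4)$ follows by induction on $j$, again with no Gauss-sum evaluation needed.
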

\begin{proof}
There is a one-to-one correspondence between solutions $y \pmod{2q}$ to the congruence $y^2 \equiv n^2 -4 \pmod{4q}$ and solutions $y_1 \pmod{q}$ to $y_1^2 + ny_1 + 1 \equiv 0 \pmod{q}$.
Notice that any such solution necessarily has $y_1$ coprime to $q$.  Thus, 
using the orthogonality of additive characters, we have 
\begin{align*}
 \rho_{q}(n^2-4) = \sumstar_{\substack {y_1 \shortmod{q} \\ n\equiv -y_1 -\overline{y_1}\pmod q}} 1 
= \frac{1}{q} \sum_{k \shortmod{q}} e\Big(\frac{kn}{q} \Big)S(k,k;q), 
\end{align*}
proving our first formula.  

Recall that  $\lambda_q(n^2-4) =  \sum_{q_1^2 q_2 q_3 =q} \mu(q_2)\rho_{q_3}(n^2-4)$.  
Using the formula just established, and Selberg's formula $S(z, z;q_3) =
 \sum_{d | (z,q_3)} d S(z^2/d^2, 1;q_3/d)$ 
 we obtain 
\begin{equation*}
 \lambda_q(n^2-4) = \sum_{q_1^2 q_2 q_3  = q} \frac{\mu(q_2)}{q_3} \sum_{k \shortmod{q_3}} e\Big(
 \frac{kn}{q_3} \Big) \sum_{d|(q_3,k)} d S(k^2/d^2,1;q_3/d).
\end{equation*}
 Calling $q_3/d$ as $q_4$ we may rewrite the above as 
 $$ 
 \sum_{q_1^2 q_4|q} \frac{1}{q_4} \sum_{k\shortmod {q_4}} e\Big(\frac{kn}{q_4}\Big) 
 S(k^2,q;q_4) \sum_{q_2|(q/q_1^2 q_4)} \mu(q_2) 
 = \sum_{q_1^2 q_4 =q} \frac{1}{q_4}  \sum_{k\shortmod {q_4}} e\Big(\frac{kn}{q_4}\Big) 
 S(k^2,q;q_4),
$$
proving our second formula.

From the formula for $\lambda_q(n^2-4)$ that was just established, we see that 
$$ 
\sum_{n\le z} \lambda_q(n^2-4) = 
\sum_{q_1^2 q_2=q} \frac 1{q_2} \sum_{k \shortmod{q_2}} S(k^2,1;q_2) \sum_{n\le z} e(kn/q_2).
$$ 
When $k=0$ the inner sum over $n$ is $z+O(1)$, and $S(0,1;q_2)= \mu(q_2)$, so 
that this term gives the stated main term  $z\mu(b)/b + O(1)$.   When $k\neq 0$ 
we use the Weil bound $S(k^2,1;q_2) \ll q_2^{\frac 12+\epsilon}$ together with the 
estimate $\sum_{n\le z} e(kn/q_2) \ll \Vert k/q_2 \Vert^{-1}$, where $\Vert \cdot \Vert$ 
denotes the distance to the nearest integer.  From these estimates we 
obtain readily that the error term from $k\neq 0$ terms is $O(q^{\frac 12+\epsilon})$.  
\end{proof}

We end this section by quickly sketching the proof of Theorem \ref{thm:L1sum}. 

\begin{proof}[Proof of Theorem \ref{thm:L1sum}]  By our description of prime geodesics 
we see that, since $w$ is even and vanishes on $[-2,2]$,  
$$ 
\sum_{\{P\} } \Lambda(P) \frac{w({\rm tr}(P)}{\sqrt{{\rm tr}(P)^2-4}} = 
\sum_{n=3}^{\infty} 2w(n) L(1,n^2-4) 
= \lim_{Q\to \infty} \sum_{q\le Q} \frac 1q \sum_{n\in \mathbb{Z}} \lam_q(n^2-4) w(n).   
$$ 
Using Lemma \ref{lamformula} we  find that the inner sum over $n$ above equals 
$$
\sum_{q_1^2 q_2 =q} \frac{1}{q_2} \sum_{k \shortmod{q_2}} S(k^2, 1;q_2) \sum_{n} w(n) e(kn/q_2),
$$ 
which by using Poisson summation may be written as 
$$ 
\sum_{q_1^2 q_2 =q} \frac{1}{q_2} \sum_{k \shortmod{q_2}} S(k^2,1;q_2) 
\sum_{\ell \in \mathbb{Z}} {\widehat w}(\ell+k/q_2) 
= \sum_{q_1^2 q_2 =q} \frac{1}{q_2} \sum_{\ell \in \mathbb{Z}} S(\ell^2, 1;q_2) {\widehat w}(\ell/q_2).
$$
This yields Theorem \ref{thm:L1sum}.  
\end{proof}
 
 \section{Deducing Theorem \ref{thm:mainthm} from auxiliary results}
\label{section:Smoothing}

\noindent As mentioned in the Introduction, our proof of Theorem \ref{thm:mainthm} 
hinges on two different ways of counting prime geodesics.  
One is via the Selberg trace formula and the other is via the connection with $L$-values described in Lemma \ref{coro:L}.   We describe here the results arising from each approach, and 
deduce Theorem \ref{thm:mainthm}, while the  proofs of the auxiliary results 
will be given in Sections 4 and 5.  

First we describe the result from the trace formula where we shall count 
prime geodesics with a certain smooth weighting.   Let $x^{\half + \varepsilon} \leq Y \leq x/\log{x}$ be a parameter to be chosen later.  Let $k(u)$ be a smooth, real-valued function with compact support 
on $(0, Y)$.  We suppose that $\intR k(u) du = \int_0^{Y} k(u) du = 1$ and that for 
all $j\ge 0$, 
\begin{equation}
\label{eq:k'}
\intR |k^{(j)}(u)| du \ll_j Y^{-j}.
\end{equation}
We consider a smoothed version of $\Psi_{\Gamma}(x)$; namely,
\begin{equation}
\label{eq:psik}
\Psi_{\Gamma}(x;k) = \int_0^{Y} \Psi_{\Gamma}(x+u) k(u) du.
\end{equation}  
 
 \begin{mytheo}
\label{thm:smooth}
With notation as above, we have
\begin{equation}
\label{eq:smooth}
\Psi_{\Gamma}(x;k) = x + \int_0^{Y} u k(u) du + E(x;k),
\end{equation}
where $E(x;k)$ satisfies
\begin{equation}
\label{eq:EkSelberg}
 E(x;k) \ll x^{3/2+\varepsilon} Y^{-1}, \text{ and}
\end{equation}
\begin{equation}
\label{eq:EkLuoSarnak}
 E(x;k) \ll x^{7/8 + \varepsilon} Y^{-1/4}.
\end{equation}
\end{mytheo}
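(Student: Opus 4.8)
The plan is to evaluate $\Psi_\Gamma(x;k)$ via the Selberg trace formula, choosing the geometric test function so that the hyperbolic side is \emph{exactly} the smoothed geodesic count. Set $W(m)=\int_{\max(0,\,m-x)}^{Y}k(u)\,du$, so that $W(m)=1$ for $m\le x$, $W(m)=0$ for $m\ge x+Y$, and $W$ decreases smoothly across $[x,x+Y]$. Interchanging sum and integral in \eqref{eq:psik} gives $\Psi_\Gamma(x;k)=\sum_{\{P\}}\Lambda(P)\,W(NP)$. Since $NP^{1/2}-NP^{-1/2}=\sqrt{\tr(P)^2-4}$, the hyperbolic term $\sum_{\{P\}}\frac{\Lambda(P)}{\sqrt{\tr(P)^2-4}}\,g(\log NP)$ of the trace formula coincides with this sum precisely when the even test function obeys $g(\ell)=2\sinh(\ell/2)\,W(e^{\ell})$ for $\ell\ge 0$. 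The shortest geodesic has $\tr(P)=3$, hence $\log NP\ge\log\frac{7+3\sqrt5}{2}>1$; thus the values of $g$ on $[0,1]$ do not affect the hyperbolic sum, and I redefine $g$ there to be smooth, removing the corner that $2\sinh(|\ell|/2)$ has at the origin. Then $h$, the Selberg transform of $g$, is smooth and of rapid decay, and rearranging the trace formula yields $\Psi_\Gamma(x;k)=\sum_j h(t_j)-\mathcal T$, where $\mathcal T$ collects the identity, elliptic, parabolic and continuous-spectrum contributions.

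The main term is produced by the constant eigenfunction, $t_0=i/2$ (the eigenvalue $\lambda_0=0$, equivalently the spectral point $s=1$). Substituting $m=e^{\ell}$ one finds $h(i/2)=\int_{1}^{\infty}(1-m^{-2})\,W(m)\,dm$, and since $\int_{x}^{x+Y}W(m)\,dm=\int_0^Y u\,k(u)\,du$ this equals $x+\int_0^Y u\,k(u)\,du+O(1)$, the asserted main term. The remaining non-hyperbolic terms $\mathcal T$ are smooth integral transforms of $h$; because $h(r)$ carries the oscillation $x^{ir}$ while these kernels vary slowly, integrating against them produces near-complete cancellation and $\mathcal T=O(x^{1/2+\varepsilon})$, which is dominated by both claimed error bounds. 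Hence $E(x;k)=\sum_{j\ge1}h(t_j)+O(x^{1/2+\varepsilon})$, and the problem is reduced to estimating the cuspidal spectral sum.

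For \eqref{eq:EkSelberg} I analyse $h$ directly. Away from the origin $g(\ell)=2\sinh(\ell/2)W(e^{\ell})$ is smooth: it equals $2\sinh(\ell/2)$ on $[1,\log x]$ and then drops from height $\asymp\sqrt x$ to $0$ across a cliff of $\ell$-width $\asymp Y/x$. Repeated integration by parts, using the hypotheses $\int|k^{(j)}|\ll Y^{-j}$, gives $h(r)\ll_A\sqrt x\,|r|^{-1}(1+|r|Y/x)^{-A}$ for every $A$, so $h$ is effectively supported on $|r|\le x/Y$ with $h(r)\ll\sqrt x/|r|$ there. With Weyl's law $\#\{t_j\le T\}\sim T^2/12$, whence $\sum_{t_j\le T}t_j^{-1}\ll T$, this yields $\sum_{j\ge1}|h(t_j)|\ll\sqrt x\sum_{t_j\le x/Y}t_j^{-1}\ll x^{3/2}/Y$, which is \eqref{eq:EkSelberg}.

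The stronger bound \eqref{eq:EkLuoSarnak} must instead extract cancellation from the oscillation $x^{it_j}$ present in $h(t_j)$ as $j$ varies, and here I would follow Luo--Sarnak and pass to the Kuznetsov formula. The Selberg spectral weight $1$ differs from the Kuznetsov weight $|\rho_j(1)|^2/\cosh(\pi t_j)$ by a factor proportional to $L(1,\mathrm{sym}^2 u_j)$; invoking the Hoffstein--Lockhart bounds $t_j^{-\varepsilon}\ll L(1,\mathrm{sym}^2 u_j)\ll t_j^{\varepsilon}$ I would replace $L(1,\mathrm{sym}^2 u_j)$ by a short Dirichlet polynomial in the Hecke eigenvalues $\lambda_j(n^2)$, thereby writing $\sum_{j\ge1}h(t_j)$ as a short combination of Kuznetsov sums $\sum_j\frac{\rho_j(1)\overline{\rho_j(n^2)}}{\cosh(\pi t_j)}h(t_j)$. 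Applying Kuznetsov converts each of these into a diagonal term (absorbed, with the same cancellation as above) plus $\sum_c c^{-1}S(1,n^2;c)$ against a Bessel transform of $h$; bounding these by the Weil estimate and the decay of the Bessel transform produces the saving $Y^{-1/4}$ and yields \eqref{eq:EkLuoSarnak}. I expect this final step---controlling the symmetric-square $L$-values and then executing the Kuznetsov/Bessel/Weil estimation with the correct power of $Y$---to be the main obstacle, since it is the one genuinely arithmetic ingredient and the place where the improvement over \eqref{eq:EkSelberg} is won.
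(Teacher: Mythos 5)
Your trace-formula setup, your main-term computation, and your proof of \eqref{eq:EkSelberg} are sound, and they essentially parallel the paper's argument: rather than running the full trace formula with the test function $g(\ell)=2\sinh(\ell/2)W(e^{\ell})$ and bounding the identity, elliptic, parabolic and Eisenstein terms by hand, the paper quotes Iwaniec's explicit formula (\cite{IwaniecGeodesic}, Lemma 1), $\Psi_{\Gamma}(x)=x+\sum_{|t_j|\le T}x^{s_j}/s_j+O((x/T)\log^2 x)$, which packages all of that bookkeeping; integrating against $k$ and integrating by parts repeatedly (using \eqref{eq:k'}) truncates the spectral sum at $|t_j|\le x^{1+\varepsilon}/Y$, and Weyl's law then gives \eqref{eq:EkSelberg}. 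Your bound $h(r)\ll_A \sqrt{x}\,|r|^{-1}(1+|r|Y/x)^{-A}$ is this same truncation in disguise, so up to this point the two routes are equivalent in substance.

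The genuine gap is \eqref{eq:EkLuoSarnak}: you never prove it. What you offer is a plan to re-derive the Luo--Sarnak machinery (Kuznetsov, Hoffstein--Lockhart, removal of the $L(1,\mathrm{sym}^2 u_j)$ weights, Weil), and you yourself flag its quantitative execution --- the only place the exponents $7/8$ and $1/4$ could come from --- as the main obstacle. That is precisely the step the paper does not redo: it cites inequality (58) of \cite{LuoSarnak}, namely $\sum_{|t_j|\le T}v^{it_j}\ll T^{5/4}v^{1/8}\log^2 T$ for $v\ge 2$, as a black box. Given that estimate, \eqref{eq:EkLuoSarnak} follows in a few lines from what you already have: partial summation gives $\sum_{|t_j|\le T}v^{it_j}/s_j\ll T^{1/4}v^{1/8}\log^2 T$, and applying this with $v=x+u\asymp x$ and $T=x^{1+\varepsilon}/Y$ inside the truncated sum yields $E(x;k)\ll x^{1/2}\,(x^{1+\varepsilon}/Y)^{1/4}\,x^{1/8}\log^2 x\ll x^{7/8+\varepsilon}Y^{-1/4}$. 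In your normalization one would write $h(r)=-(2/r)\int_0^{\infty}g'(\ell)\sin(r\ell)\,d\ell$ and apply the same bound to $\sum_{t_j\le T}\sin(t_j\ell)/t_j$; note that the Luo--Sarnak bound requires $v=e^{\ell}\ge 2$, which is exactly where your smoothing of $g$ on $[0,1]$ is needed. So the missing idea is not new Kuznetsov-formula work but the observation that the required spectral exponential-sum bound already exists in the literature in final form; as written, your proposal establishes the main term and \eqref{eq:EkSelberg} but leaves \eqref{eq:EkLuoSarnak} unproved.
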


Theorem \ref{thm:smooth} will be proved in the next section.  For the moment, we 
note that the estimate \eqref{eq:EkSelberg} follows immediately from the 
Selberg trace formula, but the estimate \eqref{eq:EkLuoSarnak} is more involved and 
relies on the work of Luo and Sarnak \cite{LuoSarnak}.
  
Now we also have 
\begin{equation}
\label{eq:smoothsharp}
\Psi_{\Gamma}(x) =\Psi_{\Gamma}(x) \int_0^{Y} k(u) du = \Psi_{\Gamma}(x;k) - \int_0^{Y} (\Psi_{\Gamma}(x+u)-\Psi_{\Gamma}(x)) k(u) du.
\end{equation}
The second term above which counts prime geodesics 
in short intervals will be handled using Lemma \ref{coro:L} together with 
estimates for Dirichlet $L$-functions.  A key input here is the 
work of Conrey and Iwaniec \cite{ConreyIwaniec} which bounds 
quadratic Dirichlet $L$-functions on the critical line.  
We state now the Theorem in this regard, which will be established in 
Section \ref{section:short}.

\begin{mytheo}
\label{thm:short}
Let $D$ denote a fundamental discriminant, and suppose that the bound 
\begin{equation}
\label{eq:Lbound}
L(\thalf + it, \chi_D) \ll (1 + |t|)^A |D|^{\theta + \varepsilon} 
\end{equation}
holds for some fixed $A > 0$ and a real number $\theta \ge 0$.  Then
\begin{equation}
\label{eq:short}
\Psi_{\Gamma}(x+u)-\Psi_{\Gamma}(x) = u + O(u^{\half} x^{\frac14 + \frac{\theta}{2} + \varepsilon}).
\end{equation}
\end{mytheo}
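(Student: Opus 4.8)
The plan is to start from the exact formula of Proposition \ref{coro:L}. Writing $X'=\sqrt{x+u}+1/\sqrt{x+u}$, the condition $\mathrm{tr}(P)\le X'$ isolates $NP\le x+u$, so that
\[
\Psi_{\Gamma}(x+u)-\Psi_{\Gamma}(x)=2\sum_{X<n\le X'}\sqrt{n^2-4}\,\zL{1}{n^2-4}.
\]
The expected main term is purely geometric: since $X^2-4=(\sqrt{x}-1/\sqrt{x})^2$, one computes $2\int_2^{X}\sqrt{t^2-4}\,dt=x+O(\log x)$ exactly, and hence $2\int_{X}^{X'}\sqrt{t^2-4}\,dt=u+O(\log x)$. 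Thus it suffices to show that replacing each $\zL{1}{n^2-4}$ by its average value $1$ is legitimate up to the claimed error, and the entire difficulty lies in controlling the fluctuation of $\zL{1}{n^2-4}$ as $n$ runs over the short interval $(X,X']$, which contains $\asymp u/\sqrt{x}$ integers.

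First I would truncate the Dirichlet series $\zL{1}{\delta}=\sum_q\lambda_q(\delta)q^{-1}$ smoothly at a length $P$ to be chosen. Introducing an entire weight $G(s)$ with $G(0)=1$ and rapid vertical decay, I consider $\frac{1}{2\pi i}\int_{(2)}\zL{1+s}{\delta}\,P^sG(s)\,\frac{ds}{s}$; expanding the series gives $\sum_q\lambda_q(\delta)q^{-1}V(q/P)$ for a smooth cutoff $V$, while shifting the contour to $\mathrm{Re}(s)=-\half$ crosses a simple pole at $s=0$ with residue $\zL{1}{\delta}$. On the shifted line $1+s$ sits on the critical line, and by \eqref{eq:LwithT} we have $\zL{\half+it}{\delta}=l^{-it}T_l^{(D)}(\half+it)\,\zL{\half+it}{\cd}$, where $\delta=Dl^2$. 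Formula \eqref{eq:LwithT2} shows $T_l^{(D)}(\half+it)\ll l^{\varepsilon}$ uniformly in $t$, while the hypothesis \eqref{eq:Lbound} with $|D|\le|\delta|$ controls the remaining factor; the decay of $G$ makes the $t$-integral converge, yielding
\[
\zL{1}{\delta}=\sum_q\frac{\lambda_q(\delta)}{q}\,V\!\Big(\frac{q}{P}\Big)+O\big(P^{-1/2}|\delta|^{\theta+\varepsilon}\big).
\]

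Next I would insert this into the short sum and exchange the order of summation. In the main sum the inner average $\sum_{X<n\le X'}\sqrt{n^2-4}\,\lambda_q(n^2-4)$ is evaluated by partial summation from Lemma \ref{lamformula}, giving $\frac{\mu(b)}{b}\int_{X}^{X'}\sqrt{t^2-4}\,dt+O(\sqrt{x}\,q^{1/2+\varepsilon})$ when $q=a^2b$ with $b$ squarefree. The arithmetic factor then collapses: $\sum_q\mu(b)/(qb)=\zeta(2)\prod_p(1-p^{-2})=1$, with the smooth cutoff and the convergent tail costing only $O(P^{-1/2+\varepsilon})$. Hence the main sum contributes $u\,(1+O(P^{-1/2+\varepsilon}))+O(\sqrt{x}\,P^{1/2+\varepsilon})+O(\log x)$, the Weil term $O(\sqrt{x}\,P^{1/2+\varepsilon})$ arising from summing the error $O(q^{1/2+\varepsilon})$ of Lemma \ref{lamformula} against $q^{-1}$ over $q\lesssim P$.

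Collecting the three error sources — the subconvexity tail $O(uP^{-1/2}x^{\theta+\varepsilon})$, the Weil error $O(\sqrt{x}\,P^{1/2+\varepsilon})$, and the convergence error $O(uP^{-1/2+\varepsilon})$ — and choosing $P=u\,x^{\theta-1/2}$ balances the first two into the claimed $O(u^{\half}x^{1/4+\theta/2+\varepsilon})$. The main obstacle I anticipate is the truncation/contour‑shift step: one must verify that the $t$-integral converges against the factor $(1+|t|)^A$ of \eqref{eq:Lbound} (supplied by $G$), confirm that the trivial estimate $T_l^{(D)}(\half+it)\ll l^{\varepsilon}$ holds uniformly so that no loss in $l$ accumulates (using $|D|\le|\delta|\le x$ to convert $|D|^{\theta}$ into $x^{\theta}$), and check that the residue at $s=0$ is exactly $\zL{1}{\delta}$ with no secondary poles interfering. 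By contrast the main-term extraction, resting on Lemma \ref{lamformula} and elementary partial summation, should be routine.
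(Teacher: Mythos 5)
Your proposal is correct and follows essentially the same path as the paper's proof: the paper also starts from Proposition \ref{coro:L}, approximates $\zL{1}{n^2-4}$ by the smoothly truncated sum $\sum_q \lambda_q(n^2-4)q^{-1}e^{-q/V}$ via a contour shift to $\mathrm{Re}(s)=-\half$ (your weight $G(s)/s$ is just $\Gamma(s)$ there), bounds the shifted integral by \eqref{eq:Lbound} with $|D|\le n^2$, evaluates the resulting averages with Lemma \ref{lamformula} and partial summation, sums the arithmetic factor $\sum_{a,b}\mu(b)a^{-2}b^{-2}$ to $1+O(V^{-1/2})$, and balances the same two error terms with the same choice $V=uX^{-1+2\theta}$. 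The only differences are cosmetic (you keep $\sqrt{n^2-4}$ and compare to the integral $2\int_X^{X'}\sqrt{t^2-4}\,dt$, while the paper first replaces $\sqrt{n^2-4}$ by $n$).
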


With $\theta$ as in Theorem \ref{thm:short}, we see from \eqref{eq:short}  that 
 \begin{equation*}
\label{eq:sharp2}
 \int_0^{Y} (\Psi_{\Gamma}(x+u)-\Psi_{\Gamma}(x)) k(u) du = \int_0^{Y} u k(u) du + O(Y^{\half} x^{\frac14 + \frac{\theta}{2} + \varepsilon}).
\end{equation*}
Using this with \eqref{eq:smoothsharp} and \eqref{eq:smooth} we 
deduce that 
\begin{equation} 
\label{eq:sharp3} 
\Psi_{\Gamma}(x) = x +E(x;k) +O(Y^{\frac 12} x^{\frac 14+\frac{\theta}{2}+\epsilon}).
\end{equation}

If we use the straightforward bound \eqref{eq:EkSelberg} above, we find 
that $\Psi_{\Gamma}(x) = x + O(x^{\frac 23 +\frac{\theta}{3}+\epsilon})$, 
upon choosing $Y=x^{\frac 56 -\frac{\theta}{3}}$.   
The Burgess bound $\theta = 3/16$ then gives 
$O(x^{\frac{35}{48} + \varepsilon})$, which is Iwaniec's original result, with a very different proof.  
Further, note that the Lindel\"{o}f hypothesis, which permits $\theta = 0$, gives $x^{2/3 + \varepsilon}$.

If we use the more sophisticated Luo-Sarnak bound \eqref{eq:EkLuoSarnak} in 
\eqref{eq:sharp3}, and choose $Y=x^{\frac 56 -\frac{2\theta}{3}}$, we find that $\Psi_{\Gamma}(x) = 
x+ O(x^{\frac 23 +\frac{\theta}{6}+\epsilon})$.  The work of Conrey and Iwaniec \cite{ConreyIwaniec} 
allows us to take $\theta=1/6$, and this gives Theorem \ref{thm:mainthm}.   Note that 
the assumption of the Lindel{\" o}f hypothesis does not give here 
an improvement of the $x^{\frac 23+\epsilon}$ bound. 
Iwaniec gave a heuristic for the bound $E(x) = O(x^{\frac 12+\epsilon})$ with a kind of extended Linnik-Selberg conjecture (see p.139 of \cite{IwaniecGeodesic}), but it would be interesting to find other heuristics, say using GRH.

 %Taking the Conrey-Iwaniec bound $\theta = 1/6$ gives the exponent $25/36 = 2/3 + 1/36$ which improves Cai's result of $71/102 = 2/3 + 1/34$.

 % \begin{equation}
%  \Psi_{\Gamma}(x) = x + O(Y^2 x^{-1} \log{x}) + O(Y^{\half} x^{\frac14 + \frac{\theta}{2} + \varepsilon}) + O(x^{3/2} Y^{-1}).
% \end{equation}

\section{Proof of Theorem \ref{thm:smooth}}
\label{section:smooth}

\noindent Analogously to the explicit formula in prime 
number theory, Iwaniec (\cite{IwaniecGeodesic}, see Lemma 1) 
showed that for $1\le T \le \sqrt{x}/(\log x)^2$  
\begin{equation}
\label{eq:explicit}
 \Psi_{\Gamma}(x) = x + \sum_{|t_j| \leq T} \frac{x^{s_j}}{s_j} + O\left(\frac{x}{T} \log^2{x}\right),
\end{equation}
where $s_j = 1/2+it_j$ runs over the zeros of $Z(s)$ on the line Re$(s)=1/2$, and 
the zeros are counted with multiplicity.   
From this, it follows that 
$$ 
\Psi_{\Gamma}(x;k) = \int_{0}^{Y} k(u) \Big( x+u + \sum_{|t_j|\le T} \frac{(x+u)^{s_j}}{s_j} 
+ O\Big(\frac{x}{T}\log^2 x \Big)\Big) du. 
$$ 
Thus, choosing $T=\sqrt{x}/(\log x)^3 $, we conclude that 
\begin{equation} 
\label{eq:exp1} 
E(x;k) = \sum_{|t_j| \le \sqrt{x}/(\log x)^3} \frac{1}{s_j} \int_0^Y (x+u)^{s_j} k(u)du + O(x^{\frac 12+\epsilon}). 
\end{equation}

Integrating by parts $\ell$ times and using \eqref{eq:k'} we find that 
$$ 
\int_{0}^Y (x+u)^{s_j} k(u) du = (-1)^{\ell} \int_0^Y \frac{(x+u)^{s_j+\ell}}{(s_j+1)\cdots(s_j+\ell)} 
k^{(\ell)}(u) du \ll_{\ell } \frac{x^{\frac 12+\ell}}{(|s_j| Y)^{\ell}}. 
$$ 
Choosing $\ell$ suitably large, and recalling that there 
are $O(T^2)$ eigenvalues $s_j=1/2+it_j$ with $|t_j|\le T$, we 
find that the contribution of terms with $|t_j| \ge x^{1+\epsilon}/Y$ to \eqref{eq:exp1} 
is at most $O(x^{\frac 12+\epsilon})$.   Thus we have 
\begin{equation} 
\label{eq:exp2} 
 E(x;k) = \sum_{|t_j| \le x^{1+\epsilon}/Y} \frac{1}{s_j} \int_0^Y (x+u)^{s_j} k(u)du + O(x^{\frac 12+\epsilon}). 
\end{equation}

Clearly the sum above is $\ll x^{\frac 12} \sum_{|t_j|\le x^{1+\epsilon}/Y} 1/|s_j| 
\ll x^{\frac 32+\epsilon}/Y$, by using again that there are $O(T^2)$ eigenvalues 
with $|t_j|\le T$.  This yields the bound \eqref{eq:EkSelberg}.

To obtain \eqref{eq:EkLuoSarnak}, we invoke 
(58) of Luo and Sarnak \cite{LuoSarnak}, which gives for $v\geq 2$
\begin{equation*}
\label{eq:LS}
\sum_{|t_j| \leq T} v^{i t_j} \ll T^{5/4} v^{1/8} \log^2{T}.
\end{equation*}
 Using this and partial summation in \eqref{eq:exp2}, we obtain \eqref{eq:EkLuoSarnak}.

\section{The short interval result: Proof of Theorem \ref{thm:short}}
\label{section:short}

\noindent Recall that $X =\sqrt{x}+1/\sqrt{x}$ and we now set $X^{\prime}=\sqrt{x+u}+1/\sqrt{x+u}$.  
We start with Lemma \ref{coro:L}, which gives
\begin{equation*}
\label{eq:shortstep1}
\Psi_{\Gamma}(x+u) -\Psi_{\Gamma}(x) = 2 \sum_{X < n \leq X'} \sqrt{n^2-4} \zL{1}{n^2-4} 
= \Big(2+O\Big(\frac 1x\Big)\Big) \sum_{X<n\le X^{\prime}} n L(1,n^2 -4), 
\end{equation*}
since $\sqrt{n^2-4} = n(1+O(1/n^2))$.

Next we use a standard technique to approximate $\zL{1}{n^2-4}$ by a suitable Dirichlet series.  
Let $V \ge 1$ be a parameter to be chosen shortly, and write $\delta =n^2-4 =D l^2$ 
with $D$ denoting a fundamental discriminant.   Consider 
$$ 
S_V(\delta) = \sum_{q=1}^{\infty} \frac{\lambda_q(\delta)}{q} e^{-q/V} 
= \frac{1}{2\pi i} \int_{(1)} L(1+s,\delta) V^s \Gamma(s)ds.
$$ 
We now move the line of integration to Re$(s)=-\frac 12$, and cross a pole at $s=0$.      
Thus 
$$ 
S_V(\delta) = L(1,\delta) + \frac{1}{2\pi i} \int_{(-\frac 12)} L(1+s,\delta) V^s \Gamma(s) ds,
$$ 
and using \eqref{eq:Lbound} we obtain that the integral above is 
\begin{equation*}
\ll V^{-\half} n^{\varepsilon} \intR |\Gamma(\thalf + it)| |L(\thalf + it, \chi_D)| dt
\ll n^{2\theta + \varepsilon} V^{-\half}.
\end{equation*}
Thus we conclude that 
\begin{equation}
\label{eq:shortstep2}
\Psi_{\Gamma}(x+u) -\Psi_{\Gamma}(x) 
= \Big(2 +O\Big(\frac 1x\Big)\Big) \sum_{X < n \leq X'}  nS_V(n^2-4) 
 + O(u V^{-\half} X^{2 \theta + \varepsilon}).
\end{equation}
 
 If $q=a^2b$ with $b$ square-free, then we find from Lemma \ref{lamformula} and partial 
 summation
\begin{equation*}
\label{eq:lambdaavg}
 2\sum_{X < n \leq X'} n\lambda_q(n^2-4) 
= (u+O(X))  \frac{\mu(b)}{b} + O(Xq^{\half + \varepsilon}).
\end{equation*}
Therefore, 
\begin{equation*} 
 2\sum_{X< n\le X'} n S_V(n^2-4)= (u+O(X)) \sum_{a, b} 
\frac{\mu(b)}{a^2 b^2} e^{-a^2b/V} + O(X V^{\frac 12+
\varepsilon}). 
\end{equation*}
Now, by a standard contour shift argument,  
\begin{equation*}
\sum_{a, b} \frac{e^{-a^2 b/V} \mu(b)}{a^2 b^2} = 
 \frac{1}{2 \pi i} \int_{(1)} V^{s} \Gamma(s) \frac{\zeta(2+2s)}{\zeta(2+s)} ds
= 1+ O(V^{-\half}).
\end{equation*}
Using the above remarks in \eqref{eq:shortstep2} we 
conclude that 
\begin{equation*}
\Psi_{\Gamma}(x+u) - \Psi_{\Gamma}(x) = u  + O(X V^{\half + \varepsilon} 
 + u V^{-\half} X^{2 \theta + \varepsilon}).
\end{equation*}
The optimal choice for $V$ is $V = u X^{-1 + 2\theta}$ which gives Theorem \ref{thm:short}.

\section{Very short intervals: Proof of Theorem \ref{thm:shortintro}}

\noindent If $h\ge x^{25/36+\epsilon}$ then Theorem \ref{thm:shortintro}  
holds unconditionally by Theorem \ref{thm:mainthm}.  We suppose 
below that $h\le x^{25/36+\epsilon}$, and put $X=\sqrt{x}+1/\sqrt{x}$ 
and $X+\Delta = \sqrt{x+h}+1/\sqrt{x+h}$ so that $\Delta \sim h/(2\sqrt{x})$.  
Using \eqref{eq:psi1}, we find that 
\begin{equation*} 
 \Psi_{\Gamma}(x+h) - \Psi_{\Gamma}(x)= \sum_{X<n\le X+\Delta} 2\sqrt{n^2-4} L(1,n^2-4) 
 \sim 2\sqrt{x} \sum_{X\le n\le X+\Delta} L(1,n^2-4).   
\end{equation*}
 Thus, to establish Theorem \ref{thm:shortintro} we 
 need only prove that, on GRH, if  
  $(\log{X})^{2 + \varepsilon} \leq \Delta \leq X^{\varepsilon}$ then
\begin{equation}
\label{eq:Lshort}
 \sum_{X < n \leq X + \Delta} \zL{1}{n^2-4} \sim \Delta.
\end{equation}
  
 Let $\delta\le 2X^2$ be a discriminant, and write as before $\delta= D l^2$ 
 with $D$ being fundamental.  We set below $Z= (\log X)^2 (\log \log X)^8$.  Then we find that on GRH 
\begin{equation*}
 L(1,\chi_D) = \prod_{p\le Z} \Big(1-\frac{\chi_D(p)}{p}\Big)^{-1} 
\Big(1 + O\Big(\frac{1}{(\log \log X)^2} \Big)\Big).  
\end{equation*}
The above estimate is standard and versions of it go back to Littlewood; for example one 
may deduce it from Lemma 2.1 of \cite{GS}.  
Note also that 
 $$ 
 l^{-1/2} T_l^{(D)}(1) =\prod_{p^a \Vert l} \Big(1+\sum_{k=1}^{a} \frac{1-\chi_D(p)}{p^k}\Big).
 $$ 
 The contribution of the primes $p>Z$ to the above product 
 is clearly bounded by 
 $$
 \exp\Big(\sum_{p|l, \ p>Z} \frac 2p\Big) = \exp\Big( O\Big(\frac{1}{Z}\sum_{p|l} 1\Big)\Big) 
 = \exp\Big(O\Big(\frac{\log X}{Z}\Big)\Big).
 $$  
 Thus
 $$ 
 L(1,\delta) =\prod_{p\le Z}\Big( \sum_{k=0}^{\infty} \frac{\lambda_{p^k}(\delta)}{p^k} \Big) 
 \Big( 1+ O\Big( \frac{1}{(\log \log X)^2}\Big).
  $$ 
 Using \eqref{lam}, 
 we note that the product above is $\ll \log Z \ll \log \log X$.  Moreover, setting 
 $z= (\log \log X)^{2-\epsilon}$, and 
 $$
 S_1(\delta) = \sum_{z<p \le Z} \frac{\lambda_p(\delta)}{p}, \qquad 
 \text{and} \qquad S_2(\delta) = \sum_{\substack{z<p\le Z \\ p|l}} 
 \frac{1}{p} 
 $$ 
 we note, using \eqref{lam}, that the product over the primes in $[z,Z]$ is 
$$
\prod_{z\le p\le Z} \Big(1 +\frac{\lambda_p(\delta)}{p} + \frac{\lambda_{p^2}(\delta)}{p^2} 
+ O\Big(\frac{1}{p^2}\Big) \Big)
= \exp(S_1(\delta)+S_2(\delta))(1+O(1/z)).
$$
 We thus conclude that 
 \begin{equation} 
 \label{L1}
 L(1,\delta) = \prod_{p\le z} \Big(\sum_{k=0}^{\infty} \frac{\lambda_{p^k}(\delta)}{p^k} 
 \Big) \exp(S_1(\delta)+S_2(\delta) ) + O\Big( \frac{1}{(\log \log X)^{1-\epsilon}}\Big).
 \end{equation}
 
 Let ${\mathcal B}$ denote the set of bad $n \in [X,X+\Delta]$ for which 
 $|S_1(n^2-4)|\ge 1$, or $S_2(n^2-4)\ge 1$.  For $n$ which are not bad we 
 may use that $\exp(S_1(n^2-4)+S_2(n^2-4)) = 1+ O(S_1(n^2-4)+S_2(n^2-4))$.  
 Thus using \eqref{L1} we find that 
 \begin{align*}
 \sum_{X< n\le X+\Delta} L(1,n^2-4)&= 
 \sum_{X<n\le X+\Delta} \prod_{p\le z}\Big(\sum_{k=0}^{\infty} \frac{\lambda_{p^k}(n^2-4)}{p^k} 
 \Big) \Big(1+O(S_1(n^2-4)+S_2(n^2-4))\Big) 
 \\
 &+ O(|{\mathcal B}| \log \log X ) +o(\Delta).
 \end{align*}
 Now note that 
 $$ 
 \sum_{X\le  n\le X+\Delta} S_2(n^2-4) \le \sum_{z<p\le Z} \frac{1}{p} 
 \sum_{\substack{X\le n\le X+\Delta\\ p^2 |n^2-4}} 1 \ll \frac{\Delta}{z}.
 $$ 
 From this we see that the set of $n \in [X,X+\Delta]$ with $S_2(n^2-4) \ge 1$ is 
of size $\ll \Delta/z$, and moreover 
$$ 
\sum_{X\le n\le X+\Delta} \prod_{p\le z}\Big( \sum_{k=0}^{\infty} 
\frac{\lambda_{p^k}(n^2-4)}{p^k} \Big) S_2(n^2-4) \ll (\log z) \frac{\Delta}{z} = o(\Delta).
$$ 

 We shall establish that 
 \begin{equation} 
 \label{UB1} 
 \sum_{X\le n\le X+\Delta}   S_1(n^2-4)^2  
 \ll \frac{\Delta}{z} + Z^{1+\epsilon},
 \end{equation} 
 \begin{equation}
 \label{UB2} 
 \sum_{X\le n\le X+\Delta} 
 \prod_{p\le z} \Big(\sum_{k=0}^{\infty} \frac{\lambda_{p^k}(n^2-4)}{p^k} 
 \Big) S_1(n^2-4)^2  = o(\Delta),
 \end{equation}
 and 
 \begin{equation} 
 \label{E1} 
 \sum_{X\le n \le X+\Delta} 
 \prod_{p\le z} \Big(\sum_{k=0}^{\infty} \frac{\lambda_{p^k}(n^2-4)}{p^k} 
 \Big) = \Delta +o(\Delta) . 
 \end{equation} 
 From estimate \eqref{UB1} we find that the set of $n$ with $|S_1(n^2-4)| \ge 1$ 
 has size $\ll \Delta/z$.  Thus $|{\mathcal B}| \ll \Delta/z$ and so the term 
 $O(|{\mathcal B}|\log \log X)$ is $o(\Delta)$.  
 By Cauchy's inequality and \eqref{UB2} and \eqref{E1} we 
 find that 
 $$ 
 \sum_{X\le n\le X+\Delta} \prod_{p\le z}\Big(\sum_{k=0}^{\infty} \frac{\lambda_{p^k}(n^2-4)}{p^k} 
 \Big) |S_1(n^2-4)|  =o(\Delta).
 $$ 
 Thus once \eqref{UB1}, \eqref{UB2} and \eqref{E1} are established, \eqref{eq:Lshort}
 would follow.

   Let ${\mathcal S}(z)$ denote the set of all integers $q$ composed only of prime factors 
 below $z$.  Then we have, writing $q=a^2 b$ with $b$ square-free and using Lemma 
 \ref{lamformula},
 $$ 
 \sum_{X\le n\le X+\Delta} \sum_{q\in {\mathcal S}(z)} \frac{\lambda_q(n^2-4)}{q} 
 = \sum_{q=a^2 b\in {\mathcal S}(z)} \Big( \Delta \frac{\mu(b)}{a^2 b^2} 
 + O\Big( \frac{1}{(a^2 b)^{\frac 12-\epsilon}}\Big) \Big)
 =\Delta + O\Big( \exp\Big( \sum_{p\le z}\frac{1}{p^{\frac12-\epsilon}}\Big)\Big).
 $$ 
 Since $z=(\log \log X)^{2-\epsilon}$ the error term 
 above is $o(\Delta)$ and \eqref{E1} follows.  
 
 Next note that 
 $$ 
 \sum_{X\le n \le X+\Delta} 
 \Big| \sum_{z< p\le Z} \frac{\lambda_p(n^2-4)}{p }\Big|^2 
 = \sum_{z< p_1, p_2 \le Z } \frac{1}{p_1 p_2} \sum_{X\le n\le X+\Delta} \lambda_{p_1}(n^2-4) 
 \lambda_{p_2}(n^2-4). 
 $$ 
 The terms $p_1=p_2$ contribute $\ll \Delta \sum_{z<p\le Z} 1/p^2 \ll \Delta/z$.  
 As for the terms $p_1\neq p_2$, these contribute 
 $$ 
 \ll \sum_{z<p_1 \neq p_2\le Z} \frac{1}{p_1 p_2} \Big( \frac{\Delta}{p_1p_2} + O((p_1p_2)^{\frac 12+\epsilon})\Big) 
 \ll \frac{\Delta}{z^2} + Z^{1+\epsilon}. 
 $$ 
 Thus \eqref{UB1} has been established.  
 
 Finally note that the quantity to be estimated in \eqref{UB2} is 
 $$ 
  \sum_{X\le n\le X+\Delta} 
\Big( \sum_{q\in {\mathcal S}(z)} \frac{\lambda_q(n^2-4)}{q}\Big) \sum_{z<p_1, p_2 \le Z} 
 \frac{\lambda_{p_1}(n^2-4)\lambda_{p_2}(n^2-4)}{p_1 p_2}. 
 $$ 
 The terms $p_1 =p_2$ contribute an amount 
 $$ 
 \ll \sum_{z<p \le Z} \frac{1}{p^2} \sum_{X\le n\le X+\Delta} \prod_{p\le z} \Big(\sum_{k=0}^{\infty} 
 \frac{\lambda_{p^k}(n^2-4)}{p^k} \Big) \ll \frac{\Delta}{z}, 
 $$ 
 upon using \eqref{E1}.  As for the terms $p_1 \neq p_2$, note that $q\in {\mathcal S}(z)$ 
 is coprime to $p_1p_2$ and so such terms contribute (with $q=a^2 b$ and $b$ square-free)
 $$ 
 \ll \sum_{q=a^2 b \in {\mathcal S}(z)} \frac{1}{a^2 b} 
 \sum_{z< p_1 \neq p_2 \le Z} \frac{1}{p_1 p_2} \Big( \frac{\Delta}{bp_1p_2} + (qp_1p_2)^{\frac 12+\epsilon}\Big) 
 \ll \frac{\Delta}{z^2} + Z^{1+\epsilon} \exp\Big(\sum_{p\le z} \frac{1}{p^{\frac 12-\epsilon}}\Big). 
 $$ 
 Since $\Delta \ge (\log X)^{2+\epsilon}$ and $z= (\log \log X)^{2-\epsilon}$ 
 this is $o(\Delta)$, proving \eqref{UB2} and hence also \eqref{eq:Lshort} 
 and Theorem \ref{thm:shortintro}.


\begin{thebibliography}{99}

\bibitem{B} Bykovskii, V.A., {\it Density theorems and the mean value of arithmetic functions on short intervals.} (Russian)  Zap. Nauchn. Sem. S.-Peterburg. Otdel. Mat. Inst. Steklov. (POMI)  212  (1994),  Anal. Teor. Chisel i Teor. Funktsii. 12, 56--70, 196;  translation in  J. Math. Sci. (New York)  83  (1997),  no. 6, 720--730.

\bibitem{Cai} Cai, Y., {\it Prime geodesic theorem.}  J. Th\'{e}or. Nombres Bordeaux  14  (2002),  no. 1, 59--72.

\bibitem{ConreyIwaniec} Conrey, J.B. and Iwaniec, H., {\it The cubic moment of central values of automorphic $L$-functions.}  Ann. of Math. (2)  151  (2000),  no. 3, 1175--1216.

\bibitem{GS} Granville, A. and Soundararajan, K., {\it The distribution of values of $L(1,\chi_d)$}. 
 Geom. Funct. Anal.  13  (2003),  no. 5, 992--1028. 

\bibitem{HL} Hoffstein, J. and Lockhart, P. {\it Coefficients of Maass forms and the Siegel zero.}
With an appendix by Dorian Goldfeld, Hoffstein and Daniel Lieman.
Ann. of Math. (2) 140 (1994), no. 1, 161--181. 

\bibitem{IwaniecNonholo} Iwaniec, H. {\it Non-holomorphic modular forms and their applications},  Modular forms (Durham, 1983),  157--196,
Ellis Horwood Ser. Math. Appl.: Statist. Oper. Res., Horwood, Chichester, 1984.

\bibitem{IwaniecGeodesic} Iwaniec, H. {\it Prime geodesic theorem.}  J. Reine Angew. Math.  349  (1984), 136--159.

\bibitem{IK} H. Iwaniec and E. Kowalski, {\it Analytic Number Theory}. American Mathematical Society Colloquium Publications, 53.  American Mathematical Society, Providence, RI, 2004.

\bibitem{Kuznetsov} Kuzentsov, N.V. {\it The arithmetic form of Selberg's trace formula and the distribution of norms of the primitive hyperbolic classes of the modular group.} Preprint, Khabarovsk (1978).

\bibitem{LuoSarnak} Luo, W. and Sarnak, P. {\it Quantum ergodicity of eigenfunctions on ${\rm PSL}\sb 2(\mathbf Z)\backslash \mathbf H\sp 2$.}  Inst. Hautes \'{E}tudes Sci. Publ. Math.  No. 81  (1995), 207--237.

\bibitem{McKee} McKee, J. {\it The average number of divisors of an irreducible quadratic polynomial.}
Math. Proc. Cambridge Philos. Soc. 126 (1999), no. 1, 17--22. 

\bibitem{Rudnick} Rudnick, Z., {\it A central limit theorem for the spectrum of the modular domain.} Ann. Henri Poincar\'{e} 6 (2005) 863--883.

\bibitem{Sarnak} Sarnak, P. {\it Class numbers of indefinite binary quadratic forms.}
J. Number Theory 15 (1982), no. 2, 229--247. 

\bibitem{Selberg} Selberg, A. {\it Harmonic analysis and discontinuous groups in weakly symmetric Riemannian spaces with applications to Dirichlet series.}  J. Indian Math. Soc. B. 20 (1956), 47--87.

%\bibitem{Selbergprimes} Selberg, A. {\it On the normal density of primes in small intervals, and the difference between consecutive primes.} Archiv for Mathematik og Naturvidenskab B. 47 (1943), No. 6, 87--105.

%\bibitem[So]{Sound}  Soundararajan, K. {\it Moments of the Riemann zeta-function.}  To appear in Ann. of Math., available online at http://arxiv.org/abs/math/0612106.
%\bibitem[SY]{SY} Sound and Young.

\bibitem{Zagier} Zagier, D. {\it Modular forms whose Fourier coefficients involve zeta-functions of quadratic fields.}  Modular functions of one variable, VI (Proc. Second Internat. Conf., Univ. Bonn, Bonn, 1976),  pp. 105--169. Lecture Notes in Math., Vol. 627, Springer, Berlin, 1977.

\end{thebibliography}
\end{document}